\documentclass[preprint]{elsarticle}

\usepackage{dsfont}
\usepackage[latin1]{inputenx}
\usepackage{amsfonts}
\usepackage{latexsym}
\usepackage{enumerate}
\usepackage{amsthm,amsmath,amssymb,mathrsfs,yhmath}
\usepackage{tikz}
\usepackage{fullpage}
\usepackage{wasysym}
\usetikzlibrary{positioning}
\usetikzlibrary{matrix}
\usepackage{float}
\usepackage{subfig}

\usepackage{pgfplots}
\usetikzlibrary{intersections}

\usepackage[pdftex,bookmarks=true]{hyperref}
\usepackage{url}

\newtheorem*{thm}{Theorem} % 1st argument is your name for it
\newtheorem{thm1}{Theorem}[section]  
\newtheorem{lemma}[thm1]{Lemma}   % 2nd argument is what is printed

\newtheorem*{corollary}{Corollary}

\theoremstyle{remark}
\newtheorem*{acknowledgements}{Acknowledgements}

\theoremstyle{definition}
\newtheorem{dfn}[thm1]{Definition}
\newtheorem{exam}[thm1]{Example}
\newtheorem{rem}[thm1]{Remark}

\usepackage[pdftex,bookmarks=true]{hyperref}
\usepackage{url}

%\usepackage{anysize}
%\marginsize{3cm}{3cm}{3cm}{3cm}

\newcommand{\cP}{\mathcal{P}}

\newcommand{\TT}{\mathbb{T}}

\newcommand{\abs}[1]{\lvert #1\rvert}

\newcommand{\Hh}[1]{\operatorname{dim}_k\operatorname{HH}^{#1}(A_\TT)}
\newcommand{\HH}[1]{\operatorname{dim}_k\operatorname{HH}^{#1}(A)}
\newcommand{\Par}[1]{^-(0,0)^-_{#1}}
\newcommand{\Ge}[1]{\operatorname{dim}_k k\mathcal G_{#1}/ \operatorname{Im}(1-t)}

%%%%%
%%%%

\begin{document}

\begin{frontmatter}
\title{Hochschild cohomology of Jacobian algebras from unpunctured surfaces: A geometric computation}

\author{Yadira Valdivieso-D\'iaz}
\ead{valdivieso@mdp.edu.ar}

%\authorrunning{Short form of author list} % if too long for running head

\address{Departamento de Matem\'atica, Facultad de Ciencias Exactas y Naturales, Funes 3350, Universidad Nacional de Mar del Plata, 7600 Mar del Plata, Argentina.}
%\email{valdivieso@mdp.edu.ar}

\begin{abstract}
There are several examples in which algebraic properties of Jacobian algebras from (unpunctured) Riemann surfaces can be computed from the geometry of the Riemann surface. 

In this work, we compute the dimension of the Hochschild cohomology groups of any Jacobian algebra from unpunctured Riemann surfaces. In those expressions appear geometric objects of the triangulated surface, namely: the number of internal triangles and certain types of boundaries. Moreover, we give geometric conditions on the triangulated surface $(S,M,\TT)$ such that the Gerstenhaber algebra $\operatorname{HH}^*(A_\TT)$ has non-trivial multiplicative structures.

We also show that the derived class of Jacobian algebras from an unpunctured surface $(S,M)$ is not always completely determined by the Hochschild cohomology.
\end{abstract}

\begin{keyword}
Hochschild cohomology\sep Jacobian algebras\sep Riemann surfaces with marked points
%% keywords here, in the form: keyword \sep keyword

%% PACS codes here, in the form: \PACS code \sep code

%% MSC codes here, in the form: \MSC code \sep code
%% or \MSC[2008] code \sep code (2000 is the default)
\MSC[2000]{Primary 16E40, 16G20; Secondary 16W99}

\end{keyword}
%\keywords{Hochschild cohomology, Jacobian algebras, Riemann marked surfaces}

\end{frontmatter}

%\maketitle

\section{Introduction}

Let $k$ be an algebraically closed field. A potential $W$ for a quiver $Q$ is a possibly infinite linear combination of cyclic paths in the  complete path algebra $k\langle\langle Q\rangle\rangle$. The Jacobian algebra $\cP(Q,W)$ associated to a quiver with a potential
$(Q,W)$ is  the quotient of the complete path algebra $k\langle\langle Q\rangle\rangle$ modulo the Jacobian
ideal $J(W)$. Here, $J(W)$ is the topological closure of the ideal of $k\langle\langle Q\rangle\rangle$
which is generated by the cyclic derivatives of $W$ with respect to the arrows
of $Q$.

For any (tagged) triangulation $\TT$ of a Riemann surface with marked points $(S,M)$, it is possible to construct a finite dimensional Jacobian algebra $A_\TT$, whose quiver $Q_\TT$ was defined by Fomin, Shapiro and Thurston in \cite{FST08} and the potential $W_\TT$ was given by Labardini-Fragoso in \cite{LF09,LF12}. In case $(S,M)$ is a unpunctured surface, the Jacobian algebra $A_\TT$ is isomorphic to the gentle algebra defined by Assem, Br\"ustle, Charbonneau-Jodoin and Plamondon in \cite{ABCJP10}; see Section 2 for definition.

The Hochschild cohomology groups $\operatorname{HH}^{i}(A)$ of an algebra $A$, where $i\geq 0$, were introduced by Hochschild in \cite{Ho46}. The low-dimensional groups, namely for $i=0,1,2$, have a concrete interpretation of classical algebraic structures, but in general it is quite hard to compute them, even in those lower dimensional cases. However, an explicit formula for the dimension of the Hochschild cohomology $\HH{i}$ of some subclasses of special biserial algebras had been computed in terms of combinatorial data, for example in \cite{B06,ST10,Lad12b}. In particular, recently Redondo and Rom\'an obtained a formula for quadratic string algebras and therefore for gentle algebras (see \cite{RR15}).

The aim of this work is to compute the dimension of $\operatorname{HH}^n(A_\TT)$ for any triangulated surface $(S, M, \TT)$ in terms of geometric data using the results of Redondo and Rom\'an. As a consequence of these computations, we have that the non-trivial multiplicative structure of the Gerstenhaber algebra $\operatorname{HH}^*(A_\TT)$ depends on the existence of internal triangles in $\TT$, see Section 2 for definition of internal triangle. Our main result is the following:

\begin{thm}\label{principal}
Let $A_\TT=\cP(Q_\TT, W_\TT)$ be the Jacobian algebra of an unpunctured triangulated surface $(S,M,\TT)$. Denote by $\operatorname{int}(\TT)$ the set of internal triangles of $\TT$, by $\mathcal B_0$ the set of boundaries of type 0 and by $\mathcal B_1$ the set of boundaries of type 1(See Figure \ref{Frontera0} and \ref{Frontera} in Section 3). Then:

\begin{itemize}
\item[(a)] $\Hh{0}=1+\abs{\mathcal B_0}$
\item[(b)] $\Hh{1}=1+ \abs{\mathcal B_1}+\abs{(Q_\TT)_1}-\abs{(Q_\TT)_0}$
\item[(c)] If $\operatorname{char}k\neq 2$ and $n\geq 2$, then
$$\Hh{n}=
\begin{cases}
\abs{\operatorname{int}(\TT)} & \mbox{ if \ } n\equiv 0,1 \pmod{6}\\
0 & \mbox{otherwise}
\end{cases}$$
\item[(d)] If $\operatorname{char}k= 2$ and $n\geq 2$, then
$$\Hh{n}=
\begin{cases}
\abs{\operatorname{int}(\TT)} & \mbox{ if \ } n\equiv 0,1 \pmod{3}\\
0 & \mbox{otherwise}
\end{cases}.$$
\end{itemize}
Moreover, if the triangulation $\TT$ contains at least one internal triangle, then the cup product defined in $\oplus_{k\geq0}\operatorname{HH}^k(\Lambda)$ is non trivial; and if $\operatorname{char}k=0$, then the Lie bracket is also non trivial.
\end{thm}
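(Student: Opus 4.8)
The strategy is to feed the combinatorics of \cite{RR15} through a geometric dictionary. Since $(S,M)$ is unpunctured, $A_\TT=\cP(Q_\TT,W_\TT)$ is the gentle algebra of \cite{ABCJP10}, hence a quadratic monomial algebra, so the formulas of Redondo and Rom\'an for $\dim_k\operatorname{HH}^n$ of a quadratic string algebra apply as they stand. Those formulas are assembled from: the dimension of $Z(A_\TT)$; counts of parallel arrows and of parallel paths; the numbers $\dim_k k\mathcal G_n/\operatorname{Im}(1-t)$ attached to the oriented cycles $C$ of $Q_\TT$ with $C^2\in J(W_\TT)$; and a handful of low-degree correction terms. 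So the first step is purely descriptive: record $Q_\TT$ and the degree-two relations coming from $W_\TT$ in a neighbourhood of each arc and of each triangle of $\TT$.

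Next I would set up the dictionary. The crucial observation is that an internal triangle of $\TT$ is \emph{exactly} an oriented $3$-cycle $\alpha\beta\gamma$ of $Q_\TT$ all of whose length-two subpaths lie in $J(W_\TT)$, and that these are the only cycles $C$ with $C^2=0$ in $A_\TT$; each such cycle contributes the ``$k[z]$-type'' summand whose Hilbert series produces exactly the $6$-periodicity of $\dim_k\operatorname{HH}^n$ when $\operatorname{char}k\neq2$ and the $3$-periodicity when $\operatorname{char}k=2$, with value $1$ in the surviving degrees. Summing over internal triangles gives $\abs{\operatorname{int}(\TT)}$ and establishes (c) and (d) for $n\ge2$. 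For $n=0,1$ I would go through the finitely many local pictures of $\TT$ along the boundary: the configurations of type $0$ are precisely those producing one extra central element beyond the obvious unit, so $\dim_kZ(A_\TT)=1+\abs{\mathcal B_0}$, which is (a); the configurations of type $1$ are precisely those producing one extra outer derivation beyond the ``Euler--characteristic'' contribution $\abs{(Q_\TT)_1}-\abs{(Q_\TT)_0}$, which is (b). Matching every term of \cite{RR15} against these local models, and checking there is no overlap when two internal triangles share a side or when a side of an internal triangle lies on the boundary, completes the dimension count.

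For the ``moreover'', fix an internal triangle $\Delta$ with cycle $\alpha\beta\gamma$ and relations $\alpha\beta,\beta\gamma,\gamma\alpha$. On Bardzell's minimal bimodule resolution of $A_\TT$ one can write down explicit cocycles $\zeta_\Delta$ and $\eta_\Delta$ representing the degree-$6$ and degree-$7$ (resp. degree-$3$ and degree-$4$ in characteristic $2$) basis vectors attached to $\Delta$; all paths occurring in these cocycles close up inside the $3$-cycle, so the computation is local to $\Delta$ and reduces to the classical analysis of the truncated cycle algebra $kC_3/\operatorname{rad}^2$. A direct evaluation of the cup product on Bardzell's resolution, via the usual comparison with the bar resolution, shows $\zeta_\Delta\cup\zeta_\Delta\ne0$; iterating along the polynomial-type multiplication of the $\Delta$-isotypic part of $\operatorname{HH}^*$, $\zeta_\Delta^{\,m}\ne0$ for all $m\ge1$, so the cup product on $\bigoplus_{k\ge0}\operatorname{HH}^k(A_\TT)$ is non-trivial. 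In characteristic $0$ one argues for the Lie bracket via the non-negative path-length grading of $A_\TT$: the Euler derivation $\varepsilon(a)=\deg(a)\,a$ is not inner (an inner derivation with this effect would force, going once around the cycle $\alpha\beta\gamma$, the impossible relation $0=3$ in $k$), hence defines a class $E\in\operatorname{HH}^1(A_\TT)$, and $[E,\xi]$ equals the internal degree of $\xi$ times $\xi$; since the cocycle for $\zeta_\Delta$ sends its generator to a vertex idempotent it has non-zero internal degree, so $[E,\zeta_\Delta]\ne0$.

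The main obstacle is twofold. On the counting side it is the delicate matching in cohomological degrees $0$ and $1$: one has to be sure that the two boundary types isolate \emph{all} and \emph{only} the exceptional contributions of \cite{RR15}, with correct multiplicities and no double counting in the degenerate configurations (adjacent internal triangles, boundary sides of internal triangles, small surfaces). On the Gerstenhaber side, the difficulty is that ``non-trivial'' must be certified at the cochain level: one has to pin down which precise product ($\zeta_\Delta^{\,2}$, or $\eta_\Delta\cup\zeta_\Delta$, or a bracket) survives and compute its class, which is exactly where the truncated-cycle model and the periodicity of the $\Delta$-isotypic part of $\operatorname{HH}^*$ carry the argument.
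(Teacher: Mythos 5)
Your proposal is correct and follows essentially the same route as the paper: identify $A_\TT$ with the gentle algebra of \cite{ABCJP10} and translate each term of the Redondo--Rom\'an formulas into the geometry of $(S,M,\TT)$, with boundaries of type $0$ and $1$ accounting for the exceptional terms in degrees $0$ and $1$, and internal triangles accounting for $\Ge{n}$, which is non-zero exactly when $3\mid n$, whence the mod $6$ (resp.\ mod $3$) periodicity. Two remarks on where you diverge from, or fall short of, the written proof. First, for $n\geq 2$ the dimension count also requires showing that the sets $^-(0,0)^-_n$ and $\mathcal E_n$ are empty; the paper devotes two lemmas to this (using that every zero path in $AP_n$ winds around a single internal triangle), whereas in your sketch this is only implicit in ``matching every term'', so it would have to be carried out. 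Second, the ``moreover'' does not need your explicit cocycles, the truncated-cycle model $kC_3/\operatorname{rad}^2$, or the Euler derivation: the quoted theorem of \cite{RR15} already asserts that $\mathcal G_n\neq\emptyset$ for some $n>0$ forces a non-trivial cup product (and non-trivial bracket when $\operatorname{char}k=0$), so once internal triangles are identified with elements of $\mathcal G_{3k}$ the conclusion is immediate; your direct verification is a consistent but laborious alternative, and its ``locality'' claim would still require checking that the candidate product class is not a coboundary in the ambient complex rather than only in the local one.
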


\begin{rem}\label{HH}
We can compute the number of arrows of  $Q_\TT$ from the triangulated surface $(S,M,\TT)$. It is enough to consider the number of internal triangles and the number of triangles with exactly one arc being a boundary segment. More precisely, if we denote by $\operatorname{SInt}(\TT)$ the set of of triangles with exactly one arc being part of a boundary component, then $$\abs{(Q_\TT)_1}=3\abs{\operatorname{Int}(\TT)}+\abs{\operatorname{SInt}(\TT)}.$$
Since the number of vertices of $Q_\TT$ can be also computed from $(S,M,\TT)$, then $\Hh{1}$ can be computed from the geometry of $(S,M,\TT)$, with the following expression:$$\Hh{1}=1+ \abs{\mathcal B_1}+3\abs{\operatorname{Int(\TT)}}+\abs{\operatorname{SInt}(\TT)}-6g-3b-c+6,$$ where $g$ is the genus of $S$, $b$ is the number of boundary components and $c$ the number of marked points.
\end{rem}

Rickard in  \cite{Ric91}[Proposition 2.5] showed that the Hochschild cohomology is invariant under derived equivalence (see also \cite{Ha89}[Theorem 4.2] for a special case). But it is not always true that the Hochschild cohomology determine the derived class of a family of algebras. In the case of Jacobian algebras from an unpunctured disc, it was proven by Bustamente and Gubitosi in \cite{BG14}. that the Hochschild cohomology and the number of vertices determine the derived classes.

The main result show that the derived class of Jacobian algebras from a unpunctured surface $(S,M)$ is not always completely determined by the Hochschild cohomology. We give an example of such affirmation.

Also, as consequence of the result we obtain another proof of the computation of Ladkani restricted to the algebras we are working with, which is given in \cite{Lad12b}.

\section{Preliminaries}

In this section, we recall definitions and concepts used in this work. We also fix notations.

\subsection{Quivers and relations}

Let $Q$ be a finite quiver with a set of vertices $Q_0$, a set of arrows $Q_1$ and let  $s,t:Q_1 \to Q_0$ be the maps associating to each arrow $\alpha$ its source $s(\alpha)$ and its target $t(\alpha)$. A path $w$ of length $l$ is a sequence of $l$ arrows $\alpha_1, \dots, \alpha_l$ such that $t(\alpha_i)=s(\alpha_{i+1})$, we say that its source $s(w)$ is $s(\alpha_1)$ and its target $t(w)$ is $t(\alpha_l)$. We denote by $\abs{w}$ the length of the path $w$. 

The path algebra $kQ$ is the $k$-vector space with basis the set of paths in $Q$ and the product of the basis elements is given by the concatenations of the sequences of arrows of the paths $w$ and $w'$ if they form a path and zero otherwise. Let $F$ be the two-sided ideal of $kQ$ generated by the arrows of $Q$. A two-sided ideal $I$ is said to be admissible if there exists an integer $m\geq 2$ such that $F^m\subseteq I \subseteq F^2$ and its elements are called \emph{relations}. The pair $(Q,I)$ is called a \emph{bounded quiver}.

The quotient algebra $kQ/I$ is said a \emph{monomial algebra} if the admissible ideal $I$ is generated by paths and a relation is called \emph{quadratic} if it is a path of length two.

\subsection{Surfaces and triangulations}
Let $S$ be a connected oriented Riemann surface with  non-empty boundary $\partial S$ and let $M$ be a non-empty finite set on the boundary $\partial S$ such that there is at least one marked point in each boundary component. The elements of $M$ are called \emph{marked points}. We will refer to the pair $(S,M)$ as an \emph{unpunctured surface}.

\begin{dfn}
An \emph{arc} $\tau$ in $(S,M)$ is a curve in $S$ such that
\begin{itemize}
\item[(a)] the endpoints are in $M$;
\item[(b)] $\tau$ does not cross itself;
\item[(c)] $\tau$ is not isotopic to a point or to a boundary segment;
\item[(d)] $\tau$ does nor cut a monogon (disc with one marked point) or a digon (disc with two marked points).
\end{itemize}
\end{dfn}

For any two arcs $\tau$ and $\tau'$ in $S$, let $e(\tau, \tau')$ be the minimal number of crossings of $\tau$ and $\tau'$, that is, $e(\tau, \tau')$ is the minimum of numbers of crossings of curves $\sigma$ and $\sigma'$, where $\sigma$ is isotopic to $\tau$ and $\sigma$ is isotopic to $\tau'$. Two arcs $\tau$ and $\tau'$ are called \emph{non-crossing} if $e(\tau,\tau')=0$. A \emph{triangulation} $\TT$ is a maximal collection of non-crossing arcs. The arcs of a triangulation $\TT$ cut the surface into \emph{triangles}. A triangle $\triangle$ in $\TT$ is called an \emph{internal triangle} if none of its sides is a boundary segment. We refer to the triple $(S,M,\TT)$ as a \emph{triangulated surface}.

\subsection{Jacobian algebras from surfaces}
If $\TT=\{\tau_1, \cdots\tau_n\}$ is a triangulation of an unpunctured surface $(S,M)$, we define a quiver $Q_\TT$ as follows: $Q_\TT$ has $n$ vertices, one for each arc in $\TT$. We will denote the vertex corresponding to $\tau_i$ by $e_i$ (or $i$ if there is no ambiguity). The number of arrows from $i$ to $j$ is the number of triangles $\triangle$ in $\TT$ such that the arcs $\tau_i, \tau_j$ form two sides of $\triangle$, with $\tau_j$ following $\tau_i$ when going around the triangle $\triangle$ in the counter-clockwise orientation. Note that the interior triangles in $\TT$ correspond to oriented 3-cycles in $Q_{\TT}$.

Following \cite{ABCJP10,LF09}, let $W_\TT$ be the sum of all oriented 3-cycles in $Q_\TT$. Then $W_\TT$ is a potential which gives rise to a Jacobian algebra $A_\TT=\cP(Q_\TT, W_\TT)$. The latter, in this case, is isomorphic to the quotient of the path algebra of the quiver $Q_\TT$ by the two-sided ideal generated by the subpaths of length two of each oriented 3-cycle in $Q_\TT$ (see \cite{DWZ08} for definitions of quivers with potential).
 
\subsection{Dimension of the Hochschild cohomology groups of gentle algebras}

Recall that a finite dimensional algebra $A$ is \emph{gentle} if it admits a presentation $kQ/I$ satisfying the following conditions:

\begin{itemize}
\item[(G1)] At each point of $Q$ start at most two arrows and stop at most two arrows.
\item[(G2)] The ideal $I$ is generated by paths of length  2.
\item[(G3)] For each arrow $\beta$ there is at most one arrow $\alpha$ with $s(\beta)=t(\alpha)$ and at most one arrow $\gamma$  with $t(\beta)=s(\gamma)$ such that $\alpha\beta\in I$ and $\beta\gamma\in I$.
\item[(G4)] For each arrow $\beta$ there is at most one arrow $\alpha$ with $s(\beta)=t(\alpha)$ and at most one arrow $\gamma$  with $t(\beta)=s(\gamma)$ such that $\alpha\beta\notin I$ and $\beta\gamma\notin I$.
\end{itemize}

The Hochschild cohomology groups of gentle algebras have already been computed in \cite{Lad12b} by Ladkani and in \cite{RR15} by Redondo and Rom\'an. In the first case,  these results have been expressed in terms of the derived invariant introduced by Avella-Alaminos and Geiss in \cite{AG08}.  In the second one, Redondo and Rom\'an used Bardzell's resolution (see \cite{Ba97}). The results of Redondo and Rom\'an are more general, since they also computed the Hochschild cohomology of quadratic string algebras and found conditions on the bound quiver associated to the string algebra in order to have non-trivial multiplicative structures (see \cite{SW83} for definition of string algebras).

Since Jacobian algebras from unpunctured surfaces are gentle, we use the computation of Redondo and Rom\'an to prove our main result. We include the statement of their result in this work for convenience to the reader, but first we need to introduce some notations related to Bardzell's resolution, as well as others used in \cite{RR15}.

Let $A=kQ/I$ be a monomial algebra, we fix a minimal set $\mathcal R$ of paths that generates the ideal $I$. Moreover, we fix a set $\mathcal P$ of paths in $Q$ such that the set $\{\gamma+I \mid \gamma\in \mathcal P\}$ is a basis of $A$.

Since gentle algebras are monomial algebras, their Hochschild cohomology groups can be computed using a convenient projective resolution of $A$ as $A$-$A$-bimodule constructed by Bardzell in \cite{Ba97}. In this particular case, the \emph{Hochschild complex}, obtained by applying $\operatorname{Hom}_{A-A}(-, A)$ to the Hochschild resolution of Bardzell and using the isomorphisms 

$$\operatorname{Hom}_{A-A}(A\otimes_{E} kAP_n\otimes_{E} A, A)\cong \operatorname{Hom}_{E-E}(kAP_n,A)$$
is 

$$0 \longrightarrow \operatorname{Hom}_{E-E}(kAP_0, A) \stackrel{F_1}{\longrightarrow} \operatorname{Hom}_{E-E}(kAP_1, A) \stackrel{F_{2}}{\longrightarrow}\operatorname{Hom}_{E-E}(kAP_2, A)\stackrel{F_{3}}{\longrightarrow}\cdots$$
where 
\begin{itemize}
\item[(i)] $E=kQ_0$ is the sub-algebra generated by the vertices of $Q$,
\item[(ii)]$kAP_0$ is the vector space $kQ_0$, $kAP_1$ is the vector space $kQ_1$ and $kAP_n$ is the one generated by the set $$AP_n=\{\alpha_1\alpha_2\cdots\alpha_n\mid \alpha_i\alpha_{i+1}\in I, \ 1\leq i<b\}$$ for $n\geq 2$, and
\item[(iii)] the morphisms are $$F_1(f)(\alpha)=\alpha f(e_{t(\alpha)})- f(e_{s(\alpha)})$$
$$F_n(f)(\alpha_1\cdots\alpha_n)=\alpha_1f(\alpha_2\cdots \alpha_n)+(-1)^nf(\alpha_1\cdots\alpha_{n-1})\alpha_n.$$
\end{itemize}

It is easy to show the $k$-vector spaces $\operatorname{Hom}_{E-E}(kAP_n, A)$ and the one generated by the set of pairs of parallel paths
$$(AP_n// \mathcal P)=\{(\rho, \gamma)\in AP_n \times \mathcal P\mid s(\rho)=s(\gamma), t(\rho)=t(\gamma)\}.$$
are isomorphic. Then the Hochschild cohomology of monomials algebras can be computed using $k(AP_n// \mathcal P)$. For gentle algebras, it is enough to consider the following subsets of $(AP_n// \mathcal P)$:

\begin{enumerate}
\item[(a)] $^-(Q_0 // \mathcal P_1)^-=\{(e_r, \gamma)\in (Q_0 // \mathcal P_1)\ \mid Q_1\gamma\subset I, \ \gamma Q_1\subset I\}.$

\item[(b)] $^-(0,0)^-_1=\{(\alpha, \gamma)\in (Q_1// \mathcal P)\ \mid \gamma \notin \alpha kQ\cup kQ\alpha, Q_1\gamma\subset I, \ \gamma Q_1\subset I\}.$
\end{enumerate}
For any $n\geq 2$,
\begin{enumerate}
\item[(c)] $^-(0,0)^-_n=\{(\alpha_1\cdots\alpha_n, \gamma)\in (AP_n// \mathcal P)\ \mid \gamma \notin \alpha_1 kQ\cup kQ\alpha_n, Q_1\gamma\subset I, \ \gamma Q_1\subset I\}$.

\item[(d)] The set of \emph{complete pairs} $\mathcal C_n=\{(\alpha_1\cdots\alpha_n, e_r)\in (AP_n// \mathcal Q_0)\mid \alpha_n\alpha_1\in I \}$.

Let  $\mathbb Z_n=\langle t\rangle$ be the cyclic group of order $n$. Observe that $\mathbb Z_n$ acts on $\mathcal C_n$, with action given by $$t(\alpha_1\cdots\alpha_n, e_{s(\alpha_1)})=(\alpha_n\alpha_1\cdots\alpha_{n-1}, e_{s(\alpha_n)}).$$
For any $(\rho, e)\in\mathcal C_n$, we define its \emph{order} as the least natural number $k$ such that $t^k(\rho, e)=(\rho, e)$.

\item[(e)] The set of \emph{incomplete pairs} $\mathcal I_n=\{(\alpha_1\cdots\alpha_n, e_r)\in (AP_n// \mathcal Q_0)\mid \alpha_n\alpha_1\notin I \}$.

Observe that by definition $(AP_n// \mathcal Q_0)=\mathcal C_n\sqcup\mathcal I_n$, where $\sqcup$ denotes disjoint union.

\item[(f)] $\mathcal C_n(0)=\{(\alpha_1 \cdots \alpha_n, e_r)\in \mathcal C_n\mid \text{it does not exist }\gamma\in Q_1\setminus\{\alpha_n\} \text{ nor } \beta\in Q_1\setminus\{\alpha_1\}\text{ with } \alpha_n\beta, \gamma\alpha_1\in I\}$
%
%$$\mathcal C_n(0)=\{(\alpha_1 \cdots \alpha_n, e_r)\in \mathcal C_n\mid \text{not exists arrows } \gamma\neq \alpha_n \text{and }\beta\neq \alpha_1\text{ with } \alpha_n\beta, \gamma\alpha_1\in I\}$$
%
\item[(g)] The set of \emph{gentle complete pairs} $$\mathcal G_n=\{(\alpha_1\cdots\alpha_n,e_r)\in \mathcal C_n\ \mid  t^m(\alpha_1\cdots\alpha_n,e_r)\in \mathcal C_n(0) \text{ for any } m\in \mathbb Z\}.$$

Denote by $k\mathcal G_n$ the $k$-vector space generated by the elements of $\mathcal G_n$.

\item[(h)] The set of \emph{empty incomplete pairs} $$\mathcal E_n=\{(\alpha_1\cdots\alpha_n,e_r) \in \mathcal I_n\mid \text{there is no relation } \beta\gamma\in I \text{ with } t(\beta)=r=s(\gamma)\}$$
\end{enumerate}

%Using the previous set of pairs of parallel path, we have an expression of the dimension of each Hochschild cohomology group is computed and given in the following Theorem. Moreover, the Lie bracket and the cup product defined in $\oplus_{k\geq0}\operatorname{HH}^k(\Lambda)$

Using the previous subsets of pairs of parallel paths, the following Theorem give us the dimension of each Hochschild cohomology group and also conditions such that the Lie bracket and the cup product defined in $\oplus_{k\geq0}\operatorname{HH}^k(A)$ being non-trivial.

\begin{thm1}\cite{RR15}[Corollary 3.11, Theorem 4.5, Theorem 4.9]\label{TeoLM}
Let $A=kQ/ I$ be a gentle algebra. Then

\begin{eqnarray*}
\HH{0} & = & 1 + \abs{^-(Q_0 // \mathcal P_1)^-} \\
\HH{1} & = & \begin{cases}1+ \abs{\Par{1}}+\abs{Q_1}-\abs{Q_0} & \mbox{if } \operatorname{char}k \neq 2\\ 1+ \abs{\Par{1}}+\abs{Q_1}-\abs{Q_0} +\abs{(Q_1//Q_0)} &\mbox{if }\operatorname{char}k = 2\end{cases}\\
\HH{n} & = &\abs{\Par{n}}+\abs{\mathcal E_n} +a \Ge{n} +b\Ge{n-1}
\end{eqnarray*}

where

$$(a,b)=\begin{cases} (1,0) & \mbox{if } n\geq 2, \ n \mbox{ even}, \operatorname{char} k \neq 2\\
(0,1) & \mbox{if } n\geq 2, \ n \mbox{ odd}, \operatorname{char} k \neq 2\\
(1,1)& \mbox{if } n\geq 2,  \operatorname{char} k=2
\end{cases}
$$

Moreover, if $\mathcal G_n\neq \emptyset$ for some $n>0$, then the cup product defined in $\oplus_{k\geq0}\operatorname{HH}^k(\Lambda)$ is non trivial; and if $\operatorname{char}k=0$, then the Lie bracket is also non trivial.
\end{thm1}

\section{A geometric computation of the dimension of $\Hh{n}$}
In this section we analyze the subsets of $(AP_n //\mathcal P)$ appearing in the computation of Redondo and Rom\'an. We show that some of them have a nice geometrical description while other are zero.

\begin{lemma}\label{boundaries0}
Let $(S,M,\TT)$ be a triangulated surface. Then the set
 $$^-(Q_0 // \mathcal P_1)^-=\{(e_r, \gamma)\mid \text{$\gamma$ is a path of length at least 1 and } Q_1\gamma\subset I, \gamma Q_1\subset I\}$$
 is in bijection with the boundaries of the form in Figure \ref{Frontera0}

\begin{figure}[ht!]
\centering
\begin{tikzpicture}
\tikzset{every node/.style={fill=white}};
\draw (0,-0.2) ellipse (0.5cm and 0.8cm);
\draw (0,-1) to  [out=330,in=0, looseness=1.5] node[midway]{$\tau_r$}  (0,1.3);
\draw (0,-1) to [out=210, in=190, looseness=1.5] (0,1.3);
\filldraw [black] (0,-1) circle (1.5pt)
					(0,0.6) circle (1.5pt);
\draw (0,-1) to (1.5,-2)
		(0,-1) to (-0.2,-2)
		(0,-1) to (-1.5,-2);
\node at (.4, -1.8){$\cdots$};
%\node (1) {$\cdot$};
%\path
%(1) edge[out=140, in=40, looseness=0.4, loop] node[above=3pt] {} (1);
\end{tikzpicture}
\caption{Boundary of type 0}
\label{Frontera0}
\end{figure}
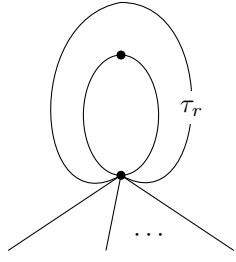
\end{lemma}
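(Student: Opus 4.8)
The plan is to translate the purely combinatorial description of ${}^-(Q_0 // \mathcal P_1)^-$ into a statement about oriented cycles in $A_\TT$, and then use the standard dictionary between paths of $A_\TT$ and rotations of arcs around a marked point to pin down which triangulated surfaces produce such cycles. First I would record the reformulation: a pair $(e_r,\gamma)$ belongs to ${}^-(Q_0 // \mathcal P_1)^-$ exactly when $\gamma$ is a path of $A_\TT$, nonzero modulo the defining ideal, with $s(\gamma)=t(\gamma)=r$ and $\abs{\gamma}\geq 1$, which cannot be completed on either side: $\alpha\gamma=0$ in $A_\TT$ for every arrow $\alpha$ with $t(\alpha)=r$, and $\gamma\alpha=0$ in $A_\TT$ for every arrow $\alpha$ with $s(\alpha)=r$. (Taking $\alpha$ to be the first arrow of $\gamma$ gives in particular $\gamma^2=0$.) Thus $\gamma$ is a \emph{maximal} nonzero oriented cycle at the vertex $\tau_r$.

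Next I would bring in the geometry. An arrow $i\to j$ of $Q_\TT$ encodes a triangle $\triangle$ two of whose sides are $\tau_i$ and $\tau_j$, with $\tau_j$ immediately after $\tau_i$ in the counter-clockwise order; reading it as ``rotating counter-clockwise from $\tau_i$ to $\tau_j$ about their common endpoint'', and recalling that the relations of $A_\TT$ are precisely the length-two subpaths of the $3$-cycles of the internal triangles, a short triangle-by-triangle check shows that in a nonzero path $\alpha_1\alpha_2\cdots$ all these rotations are about the \emph{same} marked point $P$. So a nonzero path of $A_\TT$ is the same as a counter-clockwise sweep through consecutive arcs of the star of a marked point $P$; by gentleness each step is forced once the first arrow is fixed, and the sweep stops when it meets a boundary segment. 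Since $(S,M)$ is unpunctured, $P$ lies on $\partial S$, so the star of $P$ is a \emph{fan} bounded by the two boundary segments at $P$, and an arc occurs in this fan twice if it is a loop at $P$ and at most once otherwise. Hence a nonzero oriented cycle at $\tau_r$ can exist only if $\tau_r$ is a loop at some marked point $P$, and when it exists it is forced to be the portion of the sweep about $P$ running between the two occurrences of $\tau_r$ in the fan; in particular $\gamma$ is determined by $r$.

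It then remains to see which loops occur. Writing the two occurrences of $\tau_r$ in the fan of $P$ as the positions $e_a$ and $e_b$ with $a<b$, the sweep from $e_a$ to $e_b$ fails to extend on the right precisely when the triangle on the far side of $\tau_r$ contributes no arrow out of the vertex $\tau_r$, i.e.\ when the side after $\tau_r$ there is a boundary segment, which forces $e_b$ to be the last arc of the fan; symmetrically the left-hand condition forces $e_a$ to be the first arc. Therefore $(e_r,\gamma)\in{}^-(Q_0 // \mathcal P_1)^-$ if and only if $\tau_r$ is a loop at a marked point $P$ which is both the first and the last arc of the fan at $P$ --- equivalently, the triangle on the far side of $\tau_r$ has $\tau_r$ as its only non-boundary side, while the region on the near side is a fan closing up along $\tau_r$. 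This is exactly the local picture of Figure \ref{Frontera0}. Conversely, a triangulated surface carrying this configuration determines, by reading off the near-side fan, a single maximal nonzero oriented cycle $\gamma$ at $\tau_r$, hence a single pair $(e_r,\gamma)$; and since the configuration is recovered from $(e_r,\gamma)$ by passing to $\tau_r$ and its two adjacent triangles, the two assignments are mutually inverse, which is the asserted bijection.

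I expect the main obstacle to be this last step: one has to do the orientation bookkeeping carefully enough to be sure the two ``cannot be completed'' conditions correspond precisely to ``$\tau_r$ is the first and last arc of the fan'', and to dispose of the short fans --- these in fact cannot occur, since a fan that is too short would make $\tau_r$ cut out a monogon or would force a self-folded triangle, both excluded because $(S,M)$ is unpunctured. The second step, while conceptually the heart of the argument, only uses the well-known local structure of $Q_\TT$ near a triangle.
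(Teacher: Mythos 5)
Your proposal is correct and follows essentially the same route as the paper: both rest on the standard fact that non-zero paths of $A_\TT$ are sweeps through the fan of arcs at a single (boundary) marked point, deduce from the existence of a non-zero cycle at $e_r$ that $\tau_r$ must be a loop, and then use the non-extendability conditions $Q_1\gamma\subset I$, $\gamma Q_1\subset I$ to force the second triangle adjacent to $\tau_r$ to have both remaining sides on the boundary, which is exactly the configuration of Figure \ref{Frontera0}. Your write-up is in fact a bit more careful than the paper's on the uniqueness of $\gamma$ for a given $r$ and on the converse direction of the bijection, but the underlying argument is the same.
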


\begin{proof}
By definition, given a pair $(e_r, \gamma_1\cdots\gamma_n)\in \ ^-(Q_0//\mathcal P_1)^-$ we have that $\gamma_1\cdots\gamma_n$ is a non-zero oriented cycle starting at $e_r$. Observe that $\gamma_n\gamma_1\in I$ because $A$ is finite dimensional and quadratic, then $\gamma_1$ and $\gamma_n$ belongs to a 3-cycle arising from a triangle $\triangle$ as in Figure \ref{triangle}.

\begin{figure}[ht!]
\centering
\begin{tikzpicture}
{\tikzset{every node/.style={fill=white}};
\draw (-2,0) -- (2,0) -- (0,2)-- node[midway]{$\tau_r$} (-2,0);}
%\draw (0.8,0.8) edge[->]  node[right=0.23]{\tiny$\gamma_1$}(0.1,0.1);
\draw(-0.1,0.1) edge[->] node[left]{\tiny$\gamma_n$}(-0.8,0.8);
\draw(-0.8,0.95)edge[->] node[above]{\tiny$\gamma_1$}(0.6,0.95);
\filldraw [black] (-2,0) node[left]{$z$} circle (1.5pt)
					(2,0) node[right]{$y$} circle (1.5pt)
					(0,2) node[above]{$x$}circle (1.5pt);
\end{tikzpicture}
\caption{Triangle $\triangle$}
\label{triangle}
\end{figure}

Recall that any non-zero path from a vertex $e_i$ to a vertex $e_j$ in $A$ is coming from arcs attached to a marked point $w$ as in Figure \ref{path}, and each arrow is opposite to $w$. Then the marked point $z$ in the triangle $\triangle$ is actually the marked point $x$, therefore $\tau_r$ is a loop.

\begin{figure}[ht!]
\centering
\begin{tikzpicture}
\draw (0,0) ellipse (0.5cm and 0.8cm);
%\draw (0,-1) to [out=330,in=0, looseness=1.5] (0,1.3);
%\draw (0,-1) to [out=210, in=190, looseness=1.5] (0,1.3);
\filldraw [black] (0,-0.8) node[above]{$w$}circle (1.5pt);
\tikzset{every node/.style={fill=white}};
\draw (0,-0.8) to node[midway]{$\tau_j$} (1.7,-2)
		(0,-0.8) to (-0.3,-2)
		(0,-0.8) to node{$\tau_i$} (-1.7,-2);
\node at (0.45, -1.8){\tiny{$\cdots$}};
%\edge[label=$\tau_x$](0,-1)(-1,-2);
%\node (1) {$\cdot$};
%\path
%(1) edge[out=140, in=40, looseness=0.4, loop] node[above=3pt] {} (1);
\end{tikzpicture}
\caption{Non-zero path}
\label{path}
\end{figure}
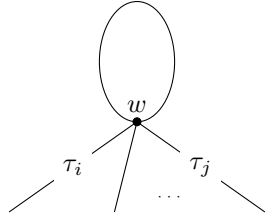

We claim that arc $\tau_r$ is part of a triangle $\triangle '$ such that the two others arcs $\tau$ and $\tau '$ are part of the boundary. Denote by $\triangle '$ the triangle formed by the arcs $\{\tau_r,\tau, \tau '\}$ different from $\triangle$. Suppose that $\tau$ is not part of the boundary, then there exists an arrow $\alpha$ such that $s(\alpha)=e_r$ or $t(\alpha)=e_r$. In the first case, $\alpha\gamma_1\cdots\gamma_m\notin I$ and in the second case $\gamma_1\cdots\gamma_m\alpha\notin I$, contradicting the fact that $(e_r, \gamma_1\cdots\gamma_n)\in \ ^-(Q_0//\mathcal P_1)^-$. Therefore no such arc exists as in Figure \ref{Frontera0}.
%\qed
\end{proof}

\begin{lemma}\label{boundary1}
Let $(S,M,\TT)$ be a triangulated surface. Then the set $$^-(0,0)^-_1=\{(\alpha, \gamma)\in (Q_1// \mathcal P)\ \mid \gamma \notin \alpha kQ\cup kQ\alpha, \ Q_1\gamma\subset I, \ \gamma Q_1\subset I\}$$ is in bijection with the boundaries depicted in Figure \ref{Frontera}.

\begin{figure}[ht!]
\centering
\begin{tikzpicture}[scale=1.3]
%\draw (0,-1) to [out=30,in=0, looseness=1.5] (0,1.3);
%\draw (0,-1) to [out=150, in=180, looseness=1.5] (0,1.3);
\filldraw [black] (0,-1) circle (1.5pt)
					(0,0.33) circle (1.5pt);
\draw (0,0.3) to [out=170, in=190, looseness=1.5](0,1);
\draw (0,1) to [out=10, in=90, looseness=1.2] (0.5,0.3);
\draw (0.5,0.3) to [out=270, in=30] (0,-1);
\draw(0,0.5) circle(5pt);
%\draw (0,-1) to (1,-2)
%		(0,-1) to (0,-2)
%		(0,-1) to (-1,-2);
\draw (0,0.33) to (-1.3,-.8);
\draw (0,0.3) to (0,-1);
\node at (-.5,-.6){$\cdots$};
\end{tikzpicture}
\caption{Boundary of type 1}
\label{Frontera}
\end{figure}
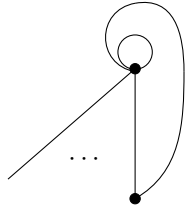

\end{lemma}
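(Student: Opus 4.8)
The plan is to argue in the same spirit as Lemma~\ref{boundaries0}. Given a pair $(\alpha,\gamma)\in{}^-(0,0)^-_1$, write $\alpha\colon i\to j$ and $\gamma=\gamma_1\cdots\gamma_m$ with $m\ge 1$. By the description of non-zero paths in $A$ used around Figure~\ref{path}, $\gamma$ is read off a fan of arcs $\tau_i=\sigma_0,\sigma_1,\dots,\sigma_m=\tau_j$, all incident to one marked point $w$, where each $\gamma_\ell$ is the arrow opposite to $w$ in the triangle $\triangle_\ell$ lying between $\sigma_{\ell-1}$ and $\sigma_\ell$. Since $A$ is finite dimensional, this fan cannot wind around $w$ more than once, so $\sigma_0,\dots,\sigma_m$ are pairwise distinct. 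On the other hand $\alpha$ comes from a triangle $\triangle_0$ having $\tau_i$ and $\tau_j$ as two of its sides; write $p=\tau_i\cap\tau_j$ for the corresponding vertex of $\triangle_0$.

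Next I would identify $\triangle_0$. If $\triangle_0$ were internal, its $3$-cycle would contain an arrow $\eta$ with $t(\eta)=i$ and $\eta\alpha\in I$; since $Q_1\gamma\subset I$ gives $\eta\gamma_1\in I$ and $\eta$ belongs to a unique triangle, $\eta\gamma_1$ must be a subpath of the $3$-cycle of $\triangle_0$, which forces $\gamma_1$ to be the arrow of $\triangle_0$ following $\eta$, namely $\alpha$ — contradicting $\gamma\notin\alpha kQ$. Hence $\triangle_0$ is not internal and its third side $b_0$ is a boundary segment. I then rule out $w=p$: the boundary segments incident to $p$ forbid a rotation around $p$ from completing a full turn, so the only rotation around $p$ from $\tau_i$ to $\tau_j$ is the single step through $\triangle_0$, which is $\alpha$, again impossible. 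Thus $w\ne p$, and since a triangle side has only two endpoints, the endpoints of $\tau_i$ and of $\tau_j$ different from $p$ must both be $w$. So $b_0$ is a loop at $w$, i.e.\ a boundary component carrying the single marked point $w$, and $\tau_i\ne\tau_j$ are two arcs from $p$ to $w$ which, together with $\triangle_0$, cut out exactly the configuration of Figure~\ref{Frontera}; moreover $\gamma$ is the rotation around $w$ from $\tau_i$ to $\tau_j$ along the side of $\tau_i\cup\tau_j$ not containing $\triangle_0$.

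For the converse I would start from a boundary of type $1$, that is, a triangle $\triangle_0=(\tau_i,\tau_j,b_0)$ in which $b_0$ is a boundary component with a unique marked point $w$ and $\tau_i\ne\tau_j$. Let $\alpha$ be the arrow of $\triangle_0$ and $\gamma$ the rotation around $w$ from $\tau_i$ to $\tau_j$ avoiding $\triangle_0$; this $\gamma$ is uniquely determined by the triangulation. One checks that $\gamma$ is non-zero (no two consecutive factors lie in a common triangle), that $\gamma_1\ne\alpha\ne\gamma_m$ so that $\gamma\notin\alpha kQ\cup kQ\alpha$, and that $\gamma$ admits no pre- or post-composition by an arrow giving a non-zero path: the arrows incident to $\tau_i$ (resp.\ $\tau_j$) come only from the two triangles containing it, and in each case the relevant composite with $\gamma_1$ (resp.\ $\gamma_m$) is either absent or, being a subpath of the $3$-cycle of an internal triangle, lies in $I$. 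Hence $(\alpha,\gamma)\in{}^-(0,0)^-_1$, and this assignment is inverse to the one described above, yielding the desired bijection.

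The step I expect to be the main obstacle is the control of the conditions $Q_1\gamma\subset I$ and $\gamma Q_1\subset I$: one has to rule out extensions of $\gamma$ not only by arrows rotating around $w$, but also by arrows living at the other marked point $p$ incident to $\tau_i$ or $\tau_j$, using repeatedly that an arrow belongs to exactly one triangle and that the relations of $A$ come only from internal triangles, so that a composite of arrows coming from two distinct triangles is never in $I$. Two further small points must be handled along the way: that the fan defining $\gamma$ does not wind around $w$ (finite dimensionality of $A$), and that the degenerate case $\tau_i=\tau_j$ — a self-folded type configuration — does not occur here, since it would not give an arrow in $Q_\TT$.
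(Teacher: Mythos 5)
Your argument is correct and follows essentially the same route as the paper's proof: you use the fan description of non-zero paths, rule out that the triangle carrying $\alpha$ is internal (via $Q_1\gamma\subset I$, the mirror image of the paper's use of $\gamma Q_1\subset I$ and quadraticity), and conclude that the third side of that triangle is a boundary segment forming a loop at the fan point, i.e.\ a boundary of type $1$ as in Figure \ref{Frontera}; your exclusion of $w=p$ by the no-full-turn argument plays the role of the paper's identification $y=z$. The only difference is that you also sketch the converse construction of $(\alpha,\gamma)$ from a type $1$ boundary and the mutual inverseness of the two assignments, which the paper leaves implicit, so your write-up is if anything more complete.
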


\begin{proof}
Consider a pair $(\alpha, \gamma_1\cdots\gamma_n)$ of parallel paths which belongs to $^-(0,0)_1^-$. By definition $\gamma_1$ and $\gamma_n$ are different from $\alpha$. Observe that $\alpha:i\longrightarrow j$ does not belong to a 3-cycle $\alpha\alpha_2\alpha_3$ arising from an internal triangle, otherwise $\gamma_1\cdots\gamma_n\alpha_2\in I$, but $A_\TT$ is quadratic then $\gamma_n\alpha_2\in I$. Therefore $\gamma_n$ and $\alpha_2$ belong to a 3-cycle arising from an internal triangle, then $\alpha=\gamma_n$, which is a contradiction.

Denote by $\triangle$ the triangle which gives rise to the arrow $\alpha$ and by $x, y, z\in M$ the vertices of $\triangle$ such that $x$ is opposite to $\alpha$. In Figure \ref{Gs}, we show the general structure of an arrow $\alpha$ which is parallel to a non-zero path $\gamma_1\cdots\gamma_n$.

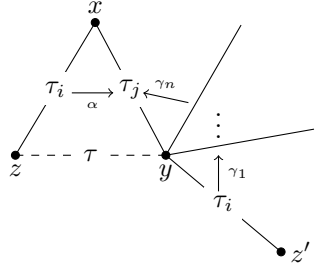
\begin{figure}[ht!]
\centering
\begin{tikzpicture}
{\tikzset{every node/.style={fill=white}};
\draw (0,0) --node[midway]{$\tau_j$} (118:2)
		(0,0) -- (60:2)
		(0,0)--(10:2)
		(0,0)--node[midway]{$\tau_i$} (320:2)
		(-2,0) --node[midway]{$\tau_i$}(118:2);
\draw [dashed] (-2,0) -- node[midway]{$\tau$}(0,0);}
\node[below] at (0,0){$y$};
\filldraw [black] (0,0) circle(1.5pt)
					(118:2) circle(1.5pt)
					(-2,0) circle(1.5pt)
					(320:2) circle(1.5pt);
\node[above] at (118:2){$x$};
\node[below] at (-2,0){$z$};
\node[right] at (320:2){$z'$};
\draw(-1.25,.83) edge[->] node[below]{\tiny$\alpha$}(-0.7,.83);
\draw (0.3,.7) edge[->] node[above]{\tiny$\gamma_n$}(-0.3, .83);
\draw(0.7, -0.48) edge[->] node[right]{\tiny$\gamma_1$}(.7,0);
\node at (0.7,.5){$\vdots$};
%\draw [dashed] (-2,0) -- node[midway]{$\tau$}(0,0);
\end{tikzpicture}
\caption{General structure}
\label{Gs}
\end{figure}

Recall that the arrow $\gamma_1$ is opposite to the vertex $z$ because  $s(\gamma_1)=s(\alpha)$ and each arrow $\gamma_i$ is opposite to $y$ for $1\leq i\leq n$, then $y=z$, therefore the arc $\tau$ is a curve isotopic to a boundary component with a marked point $y$. Then the triangle $\triangle$ is depicted in Figure \ref{Frontera} as we claim.
%\qed
\end{proof}

\begin{lemma}
The set $$^-(0,0)^-_n=\{(\alpha_1\cdots\alpha_n, \gamma)\in (AP_n// \mathcal P)\ \mid \gamma \notin \alpha_1 kQ\cup kQ\alpha_n, Q_1\gamma\subset I, \ \gamma Q_1\subseteq I\}$$ is empty for $n\geq 2$.
\end{lemma}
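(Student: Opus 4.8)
The plan is to argue by contradiction, mimicking the local analysis carried out in the proofs of Lemmas \ref{boundaries0} and \ref{boundary1}, but now exploiting the fact that we have a genuine relation path $\alpha_1\cdots\alpha_n$ of length $n\geq 2$ on the left. Suppose $(\alpha_1\cdots\alpha_n,\gamma_1\cdots\gamma_m)$ lies in $^-(0,0)^-_n$. Since $A_\TT$ is quadratic and $\alpha_i\alpha_{i+1}\in I$ for each $i$, every consecutive pair $\alpha_i\alpha_{i+1}$ is a subpath of length two of an oriented $3$-cycle coming from an internal triangle; in particular each $\alpha_i$ genuinely sits inside such a $3$-cycle. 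I would first record this structural fact, together with its geometric translation: the arcs $\tau_{s(\alpha_i)}$ and $\tau_{t(\alpha_i)}$ are two sides of an internal triangle, and consecutive arrows $\alpha_i,\alpha_{i+1}$ force their triangles to share the edge $\tau_{t(\alpha_i)}=\tau_{s(\alpha_{i+1})}$ and to lie on opposite sides of it.

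Next I would bring in the condition $\gamma\notin \alpha_1 kQ\cup kQ\alpha_n$ together with $Q_1\gamma\subset I$ and $\gamma Q_1\subset I$. The path $\gamma_1\cdots\gamma_m$ is a non-zero path from $s(\alpha_1)$ to $t(\alpha_n)$, so as in Figure \ref{path} it fans around a single marked point $y$, each $\gamma_j$ being opposite to $y$. In particular $s(\gamma_1)=s(\alpha_1)$ and $t(\gamma_m)=t(\alpha_n)$. Now I would look at the two ends. Because $\alpha_1$ lies in a $3$-cycle $\alpha_1\alpha_2\beta$ of an internal triangle $\triangle_1$, the arrow $\alpha_2$ satisfies $s(\alpha_2)=t(\alpha_1)=s(\alpha_2)$; the condition $\gamma Q_1\subset I$ applied to the arrow preceding nothing, and $Q_1\gamma\subset I$, will pin down the marked point at $s(\alpha_1)$ and $t(\alpha_n)$ in the same way the earlier lemmas did. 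Tracking which marked point each arrow is opposite to around the relation path $\alpha_1\cdots\alpha_n$, the key observation is that an internal triangle has \emph{no} boundary side, so the arcs $\tau_{s(\alpha_1)}$ and $\tau_{t(\alpha_n)}$ are each hemmed in between two arrows of $Q_\TT$ pointing out of the respective internal triangles neighbouring them; these arrows then compose non-trivially with $\gamma$, contradicting $Q_1\gamma\subset I$ or $\gamma Q_1\subset I$ unless $\gamma$ actually begins with $\alpha_1$ or ends with $\alpha_n$.

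More precisely, the argument I anticipate is: since $\alpha_1\alpha_2\in I$ comes from an internal triangle, the arc $\tau_{s(\alpha_1)}$ is a side of that internal triangle, and hence (the triangle being internal) there is an arrow $\delta\in Q_1$ with $t(\delta)=s(\alpha_1)$ lying in the same $3$-cycle, so $\delta\alpha_1\in I$ but $\delta\gamma_1\cdots\gamma_m\notin I$ because $\gamma_1$ is opposite to a \emph{different} marked point than $\alpha_1$ is — unless $\gamma_1=\alpha_1$, i.e. $\gamma\in\alpha_1 kQ$, which is excluded. Symmetrically at the $t(\alpha_n)$ end one contradicts $\gamma Q_1\subset I$. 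I would lay this out carefully, distinguishing the two orientations of the potential $3$-cycle through $\alpha_1$ (and through $\alpha_n$) if necessary, and noting that for $n\geq 2$ we really do have an arrow $\alpha_2$ (resp. $\alpha_{n-1}$) inside the cycle, which is what makes the relation force $\tau_{s(\alpha_1)}$ (resp. $\tau_{t(\alpha_n)}$) to be a non-boundary side and thereby produces the extra arrow needed for the contradiction.

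The main obstacle I expect is bookkeeping: carefully identifying, for each of the finitely many local configurations of two internal triangles glued along the edges $\tau_{s(\alpha_i)}$, which marked point each of $\alpha_i$ and $\gamma_j$ is opposite to, and checking that in every case the non-crossing/triangle axioms force a continuing non-zero path $\delta\gamma_1\cdots\gamma_m$ or $\gamma_1\cdots\gamma_m\delta$. There is also a small degenerate case to dispose of, where $m=0$, i.e. $\gamma$ is a vertex; but then $s(\alpha_1)=t(\alpha_n)$ and $\alpha_1\cdots\alpha_n$ would be a non-zero oriented cycle of length $\geq 2$ in a quadratic gentle algebra whose relations come only from $3$-cycles, which is impossible since such a cycle would have to contain a relation subpath. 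Once these local pictures are enumerated, the contradiction in each is immediate, so I do not expect any deep difficulty — just the need to be exhaustive.
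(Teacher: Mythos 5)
Your proposal is correct and follows essentially the same route as the paper: since $n\geq 2$, the relation $\alpha_1\alpha_2\in I$ places $\alpha_1$ inside a $3$-cycle $\alpha_1\alpha_2\delta$ coming from an internal triangle, and because the only quadratic relation starting at $\delta$ is $\delta\alpha_1$ (gentleness, relations being exactly the length-two subpaths of $3$-cycles), the hypothesis $\gamma\notin\alpha_1 kQ$ forces $\delta\gamma\notin I$, contradicting $Q_1\gamma\subset I$ --- the paper obtains the very same contradiction, only organized as a case analysis on $n$ modulo $3$ with the witnessing arrow chosen as $\alpha_n$, $\alpha_2$ or $\beta_3$ according to the case, so your uniform one-ended argument is a streamlined version rather than a different method. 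Two incidental slips in your write-up do not affect that decisive step, but are worth correcting: consecutive arrows $\alpha_i,\alpha_{i+1}$ with $\alpha_i\alpha_{i+1}\in I$ lie in one and the same internal triangle (they are not arrows of two triangles glued along $\tau_{t(\alpha_i)}$ on opposite sides --- that is the picture for a \emph{non}-relation composition), and the degenerate case $\gamma=e_r$ is ruled out simply because $e_r\alpha_1=\alpha_1\notin I$ violates $\gamma Q_1\subset I$, not because $\alpha_1\cdots\alpha_n$ would be a non-zero cycle (elements of $AP_n$ are zero paths by construction).
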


\begin{proof}
In order to prove the statement, we will show that given any pair of parallel paths  $(\alpha_1\cdots \alpha_n, \gamma)\in (AP_n//\mathcal P)$ such that $\gamma \notin \alpha_1 kQ\cup kQ\alpha_n$ there exists an arrow $\beta$ such that $\gamma\beta\notin I$ or $\beta\gamma\notin I$.

Denote by $\gamma_1\cdots\gamma_m$ the path $\gamma$. By hypothesis $\gamma_1\cdots\gamma_m \notin \alpha_1 kQ\cup kQ\alpha_n$, then $\gamma_1\neq \alpha_1$ and $\gamma_m\neq \alpha_n$.

Since $\alpha_i\alpha_{i+1}\in I$ for every $1\leq i\leq n-1$, the zero path $\alpha_1\cdots \alpha_n$ is a sequence of consecutive arrows of a 3-cycle $\beta_1\beta_2\beta_3$ arising form a triangle $\triangle$, such that $\alpha_1=\beta_1$. Moreover, $\alpha_{i}=\alpha_{j}=\beta_k$ if and only if $i\equiv j\equiv k\pmod{3}$, that means, in some way, that the sequence $\alpha_1\cdots\alpha_n$ is equivalent to the paths $\alpha_{1}$, $\alpha_1\alpha_2$ or $\alpha_1\alpha_2\alpha_3$, depending on the value of $n$ module $3$. We analysis three different cases: $n\equiv 0\pmod{3}$, $n\equiv 1\pmod{3}$ and $n\equiv 2\pmod{3}$.

In the first case, if $n\equiv 0\pmod{3}$, the path $\alpha_1\cdots\alpha_n$ is a cycle and $\alpha_n\alpha_1\in I$. Since $\alpha_1\neq \gamma_1$ and there is only one quadratic relation starting at $\alpha_3=\alpha_n$, namely $\alpha_3\alpha_1$, the path $\alpha_n\gamma_1\cdots\gamma_m\notin I$, as we claim.

In the second case, $n\equiv 1\pmod{3}$, we have that $n\geq 2$ and $\alpha_1=\alpha_n$. Note that there is only one quadratic relation ending at $\alpha_2$, namely $\alpha_1\alpha_2$, then $\gamma_1 \cdots\gamma_m\alpha_2\notin I$, otherwise $\gamma_m=\alpha_1$.

Finally, if  $n\equiv 2 \pmod{3}$, we have that $\beta_3\gamma_1\cdots\gamma_m\notin I$, otherwise $\beta_3\gamma_1\in I$, and that implies $\gamma_1=\beta_1=\alpha_1$ contradicting the hypothesis $\gamma \notin \alpha_1 kQ\cup kQ\alpha_n$.
%\qed
\end{proof}

\begin{lemma}\label{complete}
Let $(S,M,\TT)$ be a triangulated surface. If  $n\geq 2$, then

\begin{itemize}
\item[(a)] the set $\mathcal E_n$  of empty incomplete pairs is empty.
\item[(b)] the basis of $\mathcal G_n/ \operatorname{Im}(1-t)$ is in bijection of the internal triangles of the triangulation $\mathbb T$.
\end{itemize}
\end{lemma}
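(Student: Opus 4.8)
The plan is to analyze the two sets separately, using the local structure of the quiver $Q_\TT$ at an arc attached to a marked point, together with the fact that $A_\TT$ is quadratic and every relation is a subpath of a $3$-cycle coming from an internal triangle.

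\textbf{Part (a).} First I would recall that for a gentle algebra a pair $(\alpha_1\cdots\alpha_n, e_r)\in\mathcal I_n$ (i.e. $\alpha_n\alpha_1\notin I$) is \emph{empty} exactly when there is no relation $\beta\gamma\in I$ with $t(\beta)=r=s(\gamma)$. Since each $\alpha_i\alpha_{i+1}\in I$, the path $\alpha_1\cdots\alpha_n$ (with $n\geq 2$) contains at least the relation $\alpha_1\alpha_2\in I$, and this relation is a subpath of a $3$-cycle arising from an internal triangle. The key observation is that at the vertex $r=s(\alpha_1)=t(\alpha_n)$ we then have the arrow $\alpha_1$ starting and the arrow $\alpha_n$ stopping; I would argue that there must be a relation passing through $r$. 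Indeed $\alpha_n$ is itself part of a $3$-cycle (being the last arrow of a path all of whose length-two subpaths are relations, one shows $\alpha_n$ lies on a $3$-cycle unless $n\equiv 1\pmod 3$, and a case analysis modulo $3$ as in the previous lemma handles each situation): in every case one exhibits arrows $\beta,\gamma$ with $t(\beta)=r=s(\gamma)$ and $\beta\gamma\in I$ — typically $\beta=\alpha_n$ and $\gamma$ the next arrow of its $3$-cycle, or $\beta$ the previous arrow of the $3$-cycle of $\alpha_1$ and $\gamma=\alpha_1$. Hence $\mathcal E_n=\emptyset$.

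\textbf{Part (b).} For the second statement I would first unwind the definitions: $\mathcal G_n$ consists of complete pairs $(\alpha_1\cdots\alpha_n,e_r)$ (so $\alpha_n\alpha_1\in I$ as well, making $\alpha_1\cdots\alpha_n$ a genuine oriented cycle of zero-relations) all of whose cyclic rotations lie in $\mathcal C_n(0)$, the latter condition saying no extra relation can be prepended to $\alpha_1$ or appended to $\alpha_n$ beyond the forced ones. Because every length-two subpath $\alpha_i\alpha_{i+1}$ together with $\alpha_n\alpha_1$ is a relation, and relations come only from internal triangles, the cycle $\alpha_1\cdots\alpha_n$ must run repeatedly around a single $3$-cycle $\beta_1\beta_2\beta_3$ attached to an internal triangle $\triangle$; so $n\equiv 0\pmod 3$ and the pair is determined by $\triangle$ together with a starting arrow and the number of laps $n/3$. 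The $\mathbb Z_n$-action by cyclic rotation identifies the three choices of starting arrow within one lap, so $\mathcal G_n/\operatorname{Im}(1-t)$ is spanned by one element per internal triangle per admissible value of $n$; for fixed $n$ this gives exactly $\lvert\operatorname{int}(\TT)\rvert$ basis elements when $n$ is a positive multiple of $3$ (and I would note the $\mathcal C_n(0)$ condition is automatically satisfied here, since at each vertex of an internal triangle only the two arrows of the triangle and possibly arrows going outward are present, and the outward arrows never form relations with the triangle arrows in the required direction).

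I expect the main obstacle to be Part (b): one must be careful that the cycle of zero-relations genuinely cannot "leave" a single internal $3$-cycle — i.e. that a length-two relation $\alpha_i\alpha_{i+1}$ forces $\alpha_{i+1}$ to be the unique successor of $\alpha_i$ inside its $3$-cycle, using gentleness (G3) and the fact that the only relations are the three subpaths of each internal $3$-cycle — and then that the $\mathcal C_n(0)$/$\mathcal G_n$ refinement does not cut anything down further. Verifying that the orbit count under $1-t$ is exactly the number of internal triangles (and not, say, affected by some rotational symmetry of the $3$-cycle when $n/3>1$) is the delicate bookkeeping step; once the structure "one $3$-cycle traversed $n/3$ times" is established, the bijection with $\operatorname{int}(\TT)$ follows by sending such a pair to its underlying internal triangle.
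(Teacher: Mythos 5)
Your proposal is correct, and part (b) follows essentially the paper's own argument: every relation is a length-two subpath of the $3$-cycle of an internal triangle, so a path all of whose consecutive compositions lie in $I$ winds around a single such $3$-cycle, forcing $n\equiv 0\pmod 3$; the $\mathcal C_n(0)$ (hence $\mathcal G_n$) condition is automatic by gentleness, each orbit of the rotation action has size $3$, and the coinvariants contribute one class per internal triangle. For part (a) you take a slightly different (and in fact shorter) route: the paper proves the stronger statement that $\mathcal I_n=\emptyset$ --- any pair in $(AP_n//Q_0)$ must close up around a $3$-cycle, so $n\equiv 0\pmod 3$ and $\alpha_n\alpha_1\in I$, making every pair complete --- whereas you only rule out the "empty" condition by exhibiting, from the relation $\alpha_1\alpha_2\in I$, the third arrow $\epsilon$ of its internal $3$-cycle with $\epsilon\alpha_1\in I$ and $t(\epsilon)=r=s(\alpha_1)$; this single observation suffices, and the extra case analysis modulo $3$ you sketch is unnecessary. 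Both arguments are valid; the paper's version yields the additional structural fact $(AP_n//Q_0)=\mathcal C_n=\mathcal G_n$ for $3\mid n$ (and $\emptyset$ otherwise), which it then reuses directly in part (b), while your part (a) is more local but does not feed into (b).
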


\begin{proof}
In order to prove the first statement, we show first that
$$(AP_{n}//Q_0)=\begin{cases}\mathcal C_n &\mbox{ \ if \ } n\equiv 0\pmod{3}\\
\emptyset &\mbox{\ otherwise.}\end{cases}$$
Recall that any pair $(\alpha_1 \cdots \alpha_n, e_r)$ is an element of $(AP_n//Q_0)$ if and only if $\alpha_1\cdots \alpha_{n}$ is a cycle and $\alpha_i\alpha_{i+1}\in I$. It is clear that the set $(AP_2//Q_0)$ is empty because by construction $A_\TT$ has no 2-cycle.

Now, consider $n\geq 3$. By construction of $A_\TT$ any quadratic relation corresponds to the composition of two consecutive arrows of a 3-cycle arising of an internal triangle  $\triangle$ of $\TT$, then any path $\alpha_1\cdots \alpha_n$ such that $\alpha_i\alpha_{i+1}\in I$ is a sequence of consecutive arrows of a 3-cycle arising of an internal triangle $\triangle$, then $\alpha_i=\alpha_{3k+i}$ for $1\leq 3k+i\leq n$. Moreover, since the zero path $\alpha_1\cdots\alpha_n$ is a cycle, we have that $n\equiv 0\pmod{3}$ and $\alpha_n\alpha_1=\alpha_3\alpha_1\in I$, therefore $$(AP_{n}//Q_0)=\begin{cases}\mathcal C_{n} &\mbox{ \ if \ } n\equiv 0\pmod{3}\\
\emptyset &\mbox{\ otherwise.}\end{cases}$$

%%Moreover, the zero path $\alpha_1\cdots\alpha_n$ is a cycle if and only if $n\equiv 0\pmod{3}$ and $\alpha_n\alpha_1=\alpha_3\alpha_1=\partial_{\alpha_2}(W)$, therefore $(AP_n//Q_0)\neq \emptyset$ if and only if $n\equiv 0\pmod{3}$ and 

Since $\mathcal E_n\subseteq \mathcal I_n\subseteq (AP_n// Q_0)$ and $(AP_n// Q_0)=\mathcal C_n \sqcup \mathcal I_n$ by definition, where $\sqcup$ denotes disjoint union, the last assertion implies that $\mathcal E_n$ is empty.

Recall that if $(\alpha_1 \cdots \alpha_n, e_r)\in\mathcal C_n(0)\subseteq \mathcal C_n$ then there is no arrow $\gamma\neq\alpha_n$ and no arrow $\beta\neq \alpha_1$ such that $\alpha_n\beta\in I$ and $\gamma\alpha_1\in I$, but in our case there is only one quadratic relation involving the arrows $\alpha_n$ and $\alpha_1$, namely $\alpha_n\alpha_1=\partial_{\alpha_2}(W)$, then $$(AP_{n}//Q_0)=\begin{cases}\mathcal C_{n}(0) &\mbox{ \ if \ } n\equiv 0\pmod{3}\\
\emptyset &\mbox{\ otherwise.}\end{cases}$$

It is clear that any pair $(\alpha_1 \cdots \alpha_n, e_r)\in\mathcal C_n(0)$ satisfies  $t^m(\alpha_1 \cdots \alpha_n, e_r)\in\mathcal C_n(0)$, then by definition of gentle complete pairs we have that $$(AP_{n}//Q_0)=\begin{cases}\mathcal G_n &\mbox{ \ if \ } n\equiv 0\pmod{3}\\
\emptyset &\mbox{\ otherwise.}\end{cases}$$

Moreover,  $\mathcal G_n$ is not empty if and only if the triangulation $\TT$ contains at least one internal triangle.

Now, we show that the basis of $k\mathcal G_{3k}/ \operatorname{Im}(1-t)$ is in bijection of the internal triangles of $\TT$. Denote by $\{\triangle_1, \cdots, \triangle_m\}$ the set of internal triangles of $\TT$. Choose an element $(\rho_i, e_{e_i})$ for each triangle $\triangle_i$.

Since the order of any element $(\rho, e_r)\in \mathcal G_{3k}$ is 3, any element $x\in k\mathcal G_{3k}$ can be written as follows
$$x=\sum_{i=0}^m\sum_{j=0}^2\lambda_{ij}t^j(\rho_i, e_{s_i})$$

where $\lambda_{ij}\in k$, $(\rho_k,e_{r_k})\neq t^j(\rho_i, e_{r_i})$ if $k\neq i$ and each $\rho_k$ is a sequence of consecutive arrows of a 3-cycle arising from an internal triangle $\triangle_k$ of $\TT$.

In order to prove that $\Ge{3k}=m$ it is enough to show that $\operatorname{dim}_k\operatorname{ker}(1-t)=m$.

Let $x$ be an element in $\operatorname{ker}(1-t)$, then

\begin{eqnarray*}
0=(1-t)(x)&=& \sum_{i=0}^m\sum_{j=0}^2\lambda_{ij}t^j(\rho_i, e_{s_i}) - \sum_{i=0}^m\sum_{j=0}^2\lambda_{ij}t^{j+1}(\rho_i, e_{s_i})\\
&=& \sum_{i=0}^m\sum_{j=0}^2(\lambda_{ij}-\lambda_{i j-1})t^j(\rho_i, e_{s_i})
\end{eqnarray*}

then $\lambda_{i0}=\lambda_{ij}$ for $j=1,2$. Let $x_i$ be the sum $\sum_{j=0}^2t^j(\rho_i, e_{s_i})$, then $x$ can be rewritten as follows:

$$x=\sum_{i=0}^m\lambda_{i0}(\sum_{j=0}^2t^j(\rho_i, e_{s_i}))=\sum_{i=0}^m\lambda_{i0} x_j$$
then the elements $x_1, \cdots, x_m$ generates the vector space $\operatorname{ker}(1-t)$ and it is clear that they are linearly independent,  then $\operatorname{dim}_k\operatorname{ker}(1-t)=m$.
%\qed
\end{proof}

We are now able to prove our main result.

\begin{proof}[Proof of Theorem]
Since in Lemma \ref{boundaries0} and Lemma \ref{boundary1} we establish a bijection between $^-(0,0)^-_i$ and the set of boundaries $\mathcal B_i$  for $i=0$ and $i=1$, then the expressions for the dimension of $\Hh{0}$ and $\Hh{1}$ follow from Theorem \ref{TeoLM} and the fact that $(Q_1//Q_0)=\emptyset$ because there are no loops in the quiver $Q_{\TT}$.

For $n\geq 2$, because $^-(0,0)_n^-$ and $\mathcal E_n$ are empty sets, the expression of the dimension of $\Hh{n}$ given in Theorem \ref{TeoLM} is reduced as follows:

\begin{equation}\label{Hh}
\Hh{n} =a \Ge{n} +b\Ge{n-1}
\end{equation}

where

$$(a,b)=\begin{cases} (1,0) & \mbox{if } n\geq 2, \ n \mbox{ even}, \operatorname{char} k \neq 2\\
(0,1) & \mbox{if } n\geq 2, \ n \mbox{ odd}, \operatorname{char} k \neq 2\\
(1,1)& \mbox{if } n\geq 2,  \operatorname{char} k=2
\end{cases}
$$

By Lemma \ref{complete}, we have that 

\begin{equation} \label{interiores}
\Ge{n}=
\begin{cases}
\abs{\operatorname{Int}(\TT)} & \mbox{if }  n\equiv 0\pmod{3}\\
0 & \mbox{otherwise}
\end{cases}
\end{equation}

Therefore the final expression of the $\Hh{n}$ depends on the value of $n$ module $3$, the characteristic of $k$ and the parity of $n$. Even so, the analysis of each case is very similar.

We analyze first the case when $\operatorname{char} k \neq 2$. If $n\equiv 0 \pmod{3}$, we have $\Ge{n}=\abs{\operatorname{Int}(\TT)}$ and $\Ge{n-1}=0$, then 

\begin{equation*} \label{interiores}
\Hh{n}=
\begin{cases}
\abs{\operatorname{Int}(\TT)} & n \mbox{ even }\\
0 & n \mbox{ odd}
\end{cases}
\end{equation*}

Observe that if $n\equiv 0 \pmod{3}$ the parity of $n$ can be described in terms of the value of $n$ module $6$, namely:
\begin{itemize}
\item if $n$ is even, then $n\equiv 0\pmod{6}$,
\item if $n$ is odd, then $n\equiv 3\pmod{6}$.
\end{itemize}

In the same way, if $n\equiv 1\pmod{3}$, the parity of $n$ is described as follows:
\begin{itemize}
\item if $n$ is even, then $n\equiv 4\pmod{6}$,
\item if $n$ is odd, then $n\equiv 1\pmod{6}$.
\end{itemize}

and we have that $\Ge{n}=0$ and $\Ge{n-1}=\abs{\operatorname{Int}(\TT)}$, then 

$$\Hh{n}=\begin{cases}
\abs{\operatorname{Int}(\TT)} & \mbox{ if $n\equiv 1\pmod 6$ }\\
0 & \mbox{ if $n\equiv 4 \pmod 6$.}
\end{cases}$$

If $n\equiv 2 \pmod{3}$, the parity of $n$ is not important because we have $\Ge{n}=0$ and $\Ge{n-1}=0$, then $\Hh{n}=0$.

Finally if $\operatorname{char} k=2$, the parity of $n$ is not important either, because $a=b=1$,  then $\Hh{n}$ depends only on  the value of $n$ module $3$. In a way similar to the case $\operatorname{k}\neq 2$  we can show  

$$\Hh{n}=\begin{cases}
\abs{\operatorname{Int}(\TT)} & \mbox{ if $n\equiv 0,1\pmod 3 $}\\
0 & \mbox{ otherwise.}
\end{cases}$$

To show that the non-trivial structure of the Gerstenhaber algebra $\operatorname{HH}^*(A_\TT)$ depends only on the existence of internal triangles, it is enough to recall that the set $\mathcal G_{n}$ is not empty if and only if $n\equiv 0\pmod{3}$ and $\TT$ contains at least one internal triangle (see Lemma \ref{complete}), then the statement follows by Theorem \ref{TeoLM}.
%\qed
\end{proof}

We conclude this section with an example showing the computation of the dimension of the Hochschild cohomology groups.

\begin{exam}
Consider the triangulated surface $(S,M,\TT)$ of the Figure \ref{3boundaries}. Observe that the boundaries $B_1$ and $B_2$ are of type 1, while the boundary $B_3$ is of type 0, then $\abs{\mathcal B_0}=1$ and $\abs{\mathcal B_1}=2$. There are 3 internal triangles: $\triangle_1(\tau_1, \tau_5, \tau_7)$, $\triangle_2(\tau_1,\tau_6,\tau_2)$ and $\triangle_3(\tau_6,\tau_5,\tau_4)$. Then, according to our main result, the dimension of the Hochschild cohomology groups of $A_\TT$ is computed as follows:

\begin{itemize}
\item[a)] $\Hh{0}=1+\abs{\mathcal B_0}=2$
\item[b)] $\Hh{1}=1+ \abs{\mathcal B_1}+\abs{Q_1}-\abs{Q_0}=1+2+11-7=7$
\item[c)] If $\operatorname{char} k\neq 2$ and $n\geq 2$, then, since $\operatorname{int}(\TT)=3$, we have 
$$\Hh{n}=
\begin{cases}
3 & \mbox{ if \ } n\equiv 0,1 \pmod{6}\\
0 & \mbox{otherwise}
\end{cases}$$
\item[d)] If $\operatorname{char} k= 2$ and $n\geq 2$, then, since $\operatorname{int}(\TT)=3$, we have 
$$\Hh{n}=
\begin{cases}
3 & \mbox{ if \ } n\equiv 0,1 \pmod{6}\\
0 & \mbox{otherwise}
\end{cases}$$
\end{itemize}
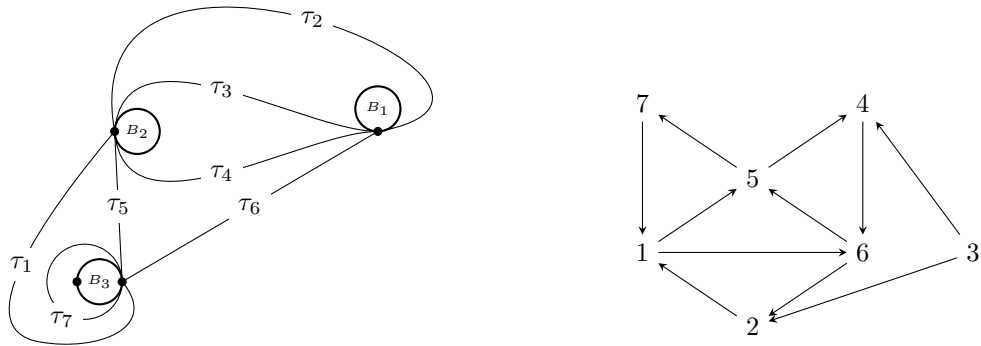
\begin{figure}[ht!]
\centering
\subfloat{
\begin{tikzpicture}
\draw[thick](-0.2,1.5) circle(0.3cm)
(-.7,-.5) circle(0.3cm)
(3,1.8)circle(0.3cm);
\node at(-0.2,1.5){\tiny{$B_2$}};
\node at(-.7,-.5) {\tiny{$B_3$}};
\node at(3,1.8){\tiny{$B_1$}};
\draw (-.9,-.5) circle(0.5cm);
%\draw (-0.5,0.5) to [out=85, in=140, looseness=1.2](2,0.5);
\tikzset{every node/.style={fill=white}};
\draw[black] (-.5,1.5) .. controls +(85:1.5cm) and +(180:1cm).. node[midway]{$\tau_3$}(3,1.5);
\draw[black] (-.5,1.5) .. controls +(-85:1.5cm) and +(-180:1cm).. node[midway]{$\tau_4$}(3,1.5);
\draw[black] (-.5,1.5) .. controls +(230:1.5cm) and +(170:1cm).. node[midway]{$\tau_1$}(-1.5,-1.3);
\draw(-1.5,-1.3) to [out=-10, in=-50, looseness=1.5] (-.4,-.5);
\draw[black] (-.5,1.5) .. controls +(100:3.5cm) and +(10:3cm).. node[midway]{$\tau_2$}(3,1.5);
\draw(-.5,1.5) to  node[midway]{$\tau_5$}(-.4,-.5);
\draw(-.4, -.5) to node[midway]{$\tau_6$}(3,1.5);
\node at (-1.2, -1){$\tau_7$};
\filldraw [black] (-.5,1.5) circle (1.5pt)
					(3,1.5) circle (1.5pt)
					(-.4,-.5)circle(1.5pt)
					(-1,-.5)circle(1.5pt);
\end{tikzpicture}
}
%%%%%%%Carcaj
%%%%%%%%%%%%
 \subfloat{
  \begin{tikzpicture}[scale=1.2]
 \matrix (m)[matrix of math nodes, row sep=1.5em,column sep=3em,ampersand replacement=\&]
{7  \&  \&  4\&  \\
    \& 5\&   \&  \\
  1 \&  \&  6\& 3\\
    \& 2\&   \&  \\
};
\path[-stealth]
	(m-1-1) edge (m-3-1)
	(m-3-1) edge (m-2-2) edge (m-3-3)
	(m-2-2) edge(m-1-1) edge (m-1-3)
	(m-3-3) edge(m-2-2) edge(m-4-2)
	(m-4-2)edge(m-3-1)
	(m-1-3)edge(m-3-3)
	(m-3-4) edge (m-1-3) edge(m-4-2);
\end{tikzpicture}
}
\caption{A triangulated surface of genus $0$ and three boundaries}
\label{3boundaries}
\end{figure}

Since $\TT$ contains $3$ internal triangles, the cup product defined in $\oplus_{k\geq0}\operatorname{HH}^k(\Lambda)$ is non trivial; and if $\operatorname{char} k=0$, then the Lie bracket is also non trivial.

\end{exam}

\section{Consequences}

As mentioned before, Ladkani computed the dimension of the Hochschild cohomology groups of gentle algebras using the invariant defined by Avella-Alaminos and Geiss (for short AG-invariant).

The AG-invariant is a function $\phi_{A_\TT}:\mathbb N^2\longrightarrow \mathbb N$ depending on the ordered pairs generated by a certain algorithm. The number $\phi_{A_\TT}(n,m)$ counts how often each pair $(n,m)$ is formed in the algorithm.  See \cite{AG08} for a complete definition of AG-invariant.

The statement of Ladkani is the following:

\begin{thm1}\cite{Lad12b}[Corollary 1]\label{TeoLad}
Let $A$ be a gentle algebra. Define $\psi_A(n)=\sum_{d\mid n} \phi_A(0,d)$ for $n\geq 1$. Then
\begin{itemize}
\item[(a)] $\Hh{0}=1+ \phi_A(1,0)$.
\item[(b)] $\Hh{1}=1+\abs{Q_1}-\abs{Q_0}+\phi_A(1,1)+ 
\begin{cases}
\phi_A(0,1) & \mbox{if } \operatorname{char}(k)=2\\
0& \mbox{otherwise}
\end{cases}$.
\item[(c)] $\Hh{n}=\phi_A(1,n)+a_n\psi_A(n)+b_n\psi_A(n-1)$ for $n\geq 2$, where
$$(a_n,b_n)=
\begin{cases}
(1,0)& \mbox{if } \operatorname{char}k\neq 2 \mbox{and $n$ is even}\\
(0,1)&\mbox{if } \operatorname{char}k\neq 2 \mbox{and $n$ is odd}\\
(1,1)&\mbox{if } \operatorname{char}k=2
\end{cases}
$$
\end{itemize}
\end{thm1}

In the case of Jacobian algebras from unpunctured surfaces (more general for surfaces algebras) this invariant was computed by David-Roesler and Schiffler in \cite{DRS12} in terms of internal triangles and boundary components. We include the statement only for Jacobian algebras. 

\begin{thm1}\cite{DRS12}[Theorem 4.6]\label{TeoDRS}
Let $A_\TT$ be the Jacobian algebras from a triangulated unpunctured surfaces $(S,M,\TT)$. Then the AG-invariant $\phi_{A_\TT}$ can be computed as follows:
\begin{itemize}
\item[(a)] $\phi_{A_\TT}(0,3)=\abs{\operatorname{int}(\TT)}$.
\item[(c)] If $m\neq 3$, then $\phi_{A_\TT}(0,m)=0$.
\item[(b)] If $n\neq 0$, then the ordered pairs $(n,m)$ in the algorithm of the AG-invariant are in bijection with the boundary components of $S$ in the following way:
if $C$ is a boundary component, then the corresponding ordered pair $(n,m)$ is given by $n=\abs{n(C,\TT)}$ the cardinality of the set $n(C,\TT)$ of  marked points in $C$ that are incident to at least one arc in $\TT$ and $m=\abs{m(C,\TT)}$ the cardinality of the set $M(C,\TT)$ of boundary segments on $C$ that have both points in $n(C,\TT)$.
\end{itemize}
\end{thm1}

\begin{corollary}
The computation of Ladkani restricted to the case of Jacobian algebras from triangulated surfaces coincides with the main result of this work. 
\end{corollary}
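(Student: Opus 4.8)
The plan is to substitute the explicit values of the Avella-Alaminos--Geiss invariant of $A_\TT$ furnished by Theorem \ref{TeoDRS} into Ladkani's formulas of Theorem \ref{TeoLad}, and to check that what results is, term by term, Theorem \ref{principal}. Nothing here is deep; it is a matter of organizing three groups of contributions: the $\psi_{A_\TT}$-terms, the terms $\phi_{A_\TT}(1,n)$ with $n\geq 2$, and the two low-degree boundary terms $\phi_{A_\TT}(1,0)$ and $\phi_{A_\TT}(1,1)$.

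For the higher-degree part, Theorem \ref{TeoDRS}(a),(c) give $\phi_{A_\TT}(0,d)=0$ for $d\neq 3$ and $\phi_{A_\TT}(0,3)=\abs{\operatorname{int}(\TT)}$, so
$$\psi_{A_\TT}(n)=\sum_{d\mid n}\phi_{A_\TT}(0,d)=\begin{cases}\abs{\operatorname{int}(\TT)}&\text{if }3\mid n,\\ 0&\text{otherwise,}\end{cases}$$
which is exactly the quantity $\Ge{n}$ of \eqref{interiores}. Moreover $\phi_{A_\TT}(1,n)=0$ for all $n\geq 2$: by Theorem \ref{TeoDRS}(b) a nonzero value would require a boundary component $C$ with $\abs{n(C,\TT)}=1$ and $\abs{m(C,\TT)}=n$, but a boundary segment with both endpoints in the singleton $n(C,\TT)$ is a loop at that lone marked point, which forces $C$ to carry only that marked point and hence at most one such segment; thus $\abs{m(C,\TT)}\leq 1$ as soon as $\abs{n(C,\TT)}=1$. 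Consequently, for $n\geq 2$ Theorem \ref{TeoLad}(c) reduces to $\Hh{n}=a_n\Ge{n}+b_n\Ge{n-1}$ with $(a_n,b_n)$ equal to the $(a,b)$ of \eqref{Hh}, which is already parts (c) and (d) of Theorem \ref{principal}.

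In degrees $0$ and $1$ what remains is to identify $\phi_{A_\TT}(1,0)$ with $\abs{\mathcal B_0}$ and $\phi_{A_\TT}(1,1)$ with $\abs{\mathcal B_1}$, together with the remark that $\phi_{A_\TT}(0,1)=0$ by Theorem \ref{TeoDRS}(c), which kills the characteristic-$2$ correction term of Theorem \ref{TeoLad}(b) just as the $\abs{(Q_1//Q_0)}$ term of Theorem \ref{TeoLM} vanishes because $Q_\TT$ has no loops. The quickest route is to observe that both Theorem \ref{principal}(a) and Theorem \ref{TeoLad}(a) compute $\Hh{0}$, forcing $\phi_{A_\TT}(1,0)=\abs{\mathcal B_0}$, and that comparing Theorem \ref{principal}(b) with Theorem \ref{TeoLad}(b) after cancelling the common summand $\abs{Q_1}-\abs{Q_0}$ forces $\phi_{A_\TT}(1,1)=\abs{\mathcal B_1}$; this already gives the coincidence of the two descriptions. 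More geometrically, one can instead use the bijection of Theorem \ref{TeoDRS}(b) between the ordered pairs $(\abs{n(C,\TT)},\abs{m(C,\TT)})$ with nonzero first entry and the boundary components $C$, and check that $\abs{n(C,\TT)}=1$ forces the surface near $C$ to be one of the two normal forms of Figures \ref{Frontera0} and \ref{Frontera}, with the two possible values $0$ and $1$ of $\abs{m(C,\TT)}$ telling them apart and matching the boundaries of type $0$ and of type $1$ of Lemmas \ref{boundaries0} and \ref{boundary1}. Either way one obtains $\Hh{0}=1+\abs{\mathcal B_0}$ and $\Hh{1}=1+\abs{Q_1}-\abs{Q_0}+\abs{\mathcal B_1}$, that is, parts (a) and (b) of Theorem \ref{principal}, completing the argument. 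Since everything else is pure substitution, the only place that asks for genuine work is this last identification of the two boundary types against the parametrization of Theorem \ref{TeoDRS}, and even that is bypassed by the a posteriori comparison of the two already-established formulas.
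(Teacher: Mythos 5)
Your proposal is correct, and in its substantive parts it follows the same path as the paper: the computation of $\psi_{A_\TT}$ from Theorem \ref{TeoDRS}(a),(c), the vanishing $\phi_{A_\TT}(0,1)=0$, and the argument that $\phi_{A_\TT}(1,n)=0$ for $n\geq 2$ (a boundary component $C$ with $\abs{n(C,\TT)}=1$ admits at most one boundary segment with both endpoints in $n(C,\TT)$) are exactly the paper's items (iii)--(v); indeed your phrasing of the last point is the intended one, the paper's ``$n\leq 2$'' being a slip for $n\geq 2$. Where you diverge is in degrees $0$ and $1$: the paper identifies the boundaries of type $0$ and type $1$ of Lemmas \ref{boundaries0} and \ref{boundary1} directly with the AG pairs $(1,0)$ and $(1,1)$ through the parametrization of Theorem \ref{TeoDRS}(b) --- this is your second, ``more geometric'' route, which you only sketch --- whereas your primary route deduces $\phi_{A_\TT}(1,0)=\abs{\mathcal B_0}$ and $\phi_{A_\TT}(1,1)=\abs{\mathcal B_1}$ a posteriori by comparing Theorem \ref{TeoLad} with the already-established Theorem \ref{principal}. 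That shortcut is logically sound for the corollary as stated (two correct formulas for the same dimensions must agree), but it makes the low-degree coincidence rest on the main theorem instead of providing an independent cross-check of those terms, which is precisely what the paper's direct matching of Figures \ref{Frontera0} and \ref{Frontera} with the pairs $(1,0)$ and $(1,1)$ supplies; if you want the corollary to carry that verificational content, you should carry out the normal-form check you defer. Either way, your handling of the terms for $n\geq 2$ is complete and the conclusion holds.
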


\begin{proof}
Using Theorem \ref{TeoDRS} and Theorem \ref{TeoLad}, we compute the dimension of Hochschild cohomology for a Jacobian algebra $A_\TT$ from triangulated surface $(S,M, \TT)$:

\begin{itemize}
\item[(i)] A boundary of type 0 depicted in the Figure \ref{Frontera0} corresponds to the pair $(1,0)$, then $\phi_{A_\TT}(1,0)=\abs{\mathcal B_0}$.
\item[(ii)] A boundary of type 1 depicted in the Figure \ref{Frontera} corresponds to the pair $(1,1)$, then $\phi_{A_\TT}(1,1)=\abs{\mathcal B_1}$.
\item[(iii)] There are no pairs $(0,1)$, then $\phi_{A_\TT}(0,1)=0$.

\item[(iv)] There are no pairs $(1,n)$ with $n\leq 2$, otherwise there exists a boundary component $C$ with one marked point which is incident to at least one arc in $\TT$ but $n$  boundary segments on $C$, then $\phi_{A_\TT}(1,n)=0$.
\item[(v)] Since $\phi_A(0,d)=
\begin{cases}
\abs{\operatorname{int}(\TT)} & \mbox{if } d=3\\
0 & \mbox{otherwise}
\end{cases}$ for a Jacobian algebra $A$ from unpunctured surface, the auxiliary function $\psi_A(n)$, defined in Theorem \ref{TeoLad}, depends on $n\equiv 0\pmod{3}$, then $$\psi_A(n)=\begin{cases}
\abs{\operatorname{int}(\TT)} & \mbox{if } n\equiv 0\pmod{3}\\
0 & \mbox{otherwise.}
\end{cases}$$
\end{itemize}

Then, it is clear that both expressions, the ones in our main result and the ones in Theorem \ref{TeoLad} are the same.
%\qed
\end{proof}

We conclude this section showing that the Hochschild cohomology invariant is not always complete for Jacobian algebras from triangulates surfaces. In case $(S,M)$ is a unpunctured disc, Bustamante and Gubitosi showed in \cite{BG14}[Theorem 1.2] that any two Jacobian algebras $A_\TT$ and $A_{\TT'}$  from two triangulated disc $(S,M, \TT)$ and $(S,M,\TT')$ are derived equivalent if and only if  $\operatorname{HH}^*(A_\TT)\cong \operatorname{HH}^*(A_{\TT'})$ and $\abs{Q_0(A_\TT)}=\abs{Q_0(A_{\TT'})}$. However, the following example show that the Hochschild cohomology loses information, and then it is not always  a good derived invariant.

\begin{exam}
Consider the triangulation $\TT_1$ and $\TT_2$ depicted in Figure \ref{conterexample} of a Torus with two boundaries components and 6 marked points. We show that $A_{\TT_1}$ and $A_{\TT_2}$ are not derived equivalent, but $\operatorname{HH}^*(A_{\TT_1})\cong \operatorname{HH}^*(A_{\TT_2})$ and $\abs{Q_0(A_{\TT_1})}=\abs{Q_0(A_{\TT_2})}$.

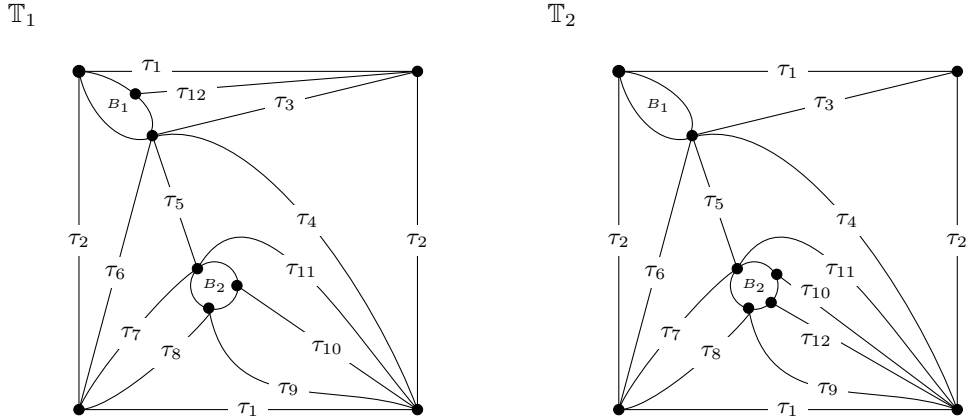
\begin{figure}[ht!]
\centering
\subfloat{
\begin{tikzpicture}[scale=1.5]
\node at (-.5,3.5){$\TT_1$};
%%% Fronteras
\draw(1.2,1.1) circle(6pt);
\node at (.35,2.7){\tiny{$B_1$}};
\node at	 (1.2,1.1){\tiny{$B_2$}};
\draw (.6, 2.4) .. controls +(190:0.3cm) and +(280:0.3cm)..(0,3);
\draw (.6, 2.4) .. controls +(50:0.34cm) and +(0:0.3cm)..(0,3);
\tikzset{every node/.style={fill=white}};
%%%% Arcos rectos
\draw [name path = 1] (0,0) to node[midway]{\small{$\tau_1$}} (3,0)
	  [name path=2]	(3,0) to node[midway]{\small{$\tau_2$}}(3,3)
		(3,3) to (0,3)
		(0,3) to node[midway]{\small{$\tau_2$}} (0,0)
		(.5,2.8) to (3,3);
\node at (0.65, 3.05){\small{$\tau_1$}};
\node at (1, 2.8){\small{$\tau_{12}$}};
\draw[name path=5](.65,2.43) to node[midway]{\small{$\tau_5$}}(1.05,1.25)
	(3,0) to node[midway]{\small{$\tau_{10}$}} (1.4,1.1);
\draw[name path=6](.65,2.43) to node[midway]{\small{$\tau_6$}}(0,0)
	 [name path=3](.65,2.43) to node[midway]{\small{$\tau_3$}}(3,3);
%%% Arcos curvos
%[name path=8](0,0) to node[midway]{$\tau_{8}$}(.85,.75)
\draw(0,0) .. controls +(0:.4cm) and +(30:.3cm).. node[midway]{\small{$\tau_8$}}(1.15,.9);
\draw(3,0) .. controls +(130:1cm) and +(60:1cm)..node[midway]{\small{$\tau_{11}$}}(1.05,1.25);
\draw(3,0) .. controls +(160:.8cm) and +(280:1cm)..node[midway]{\small{$\tau_9$}}(1.15,.9);
\draw(3,0).. controls +(110:1cm) and +(20:1cm).. node[midway]{\small{$\tau_4$}}(.6,2.4);
\draw(0,0)..controls +(70:.3cm) and +(210:.3cm)..node[midway]{\small{$\tau_7$}}(1.05,1.25);
%%%% Puntos marcados
\filldraw [black] (0,3) circle(1.5pt)
				(.65,2.43) circle(1.3pt)
				(.5,2.8) circle(1.3pt)
				(1.4,1.1) circle(1.3pt)
				(1.05,1.25) circle(1.3pt)
				(1.15,.9) circle(1.3pt)
				(3,0) circle(1.3pt)
				(0,0) circle(1.3pt)
				(3,3) circle(1.3pt);
%%% triangulos interiores
%\fillbetween[of=3 and 4 and 2];
\end{tikzpicture}
}
\hspace{1cm}
\subfloat{
\begin{tikzpicture}[scale=1.5]
\node at (-.5,3.5){$\TT_2$};
%%% Fronteras
\draw(1.2,1.1) circle(6pt);
\node at (.35,2.7){\tiny{$B_1$}};
\node at	 (1.2,1.1){\tiny{$B_2$}};
\draw (.6, 2.4) .. controls +(190:0.3cm) and +(280:0.3cm)..(0,3);
\draw (.6, 2.4) .. controls +(50:0.34cm) and +(0:0.3cm)..(0,3);
\tikzset{every node/.style={fill=white}};
%%%% Arcos rectos
\draw [name path = 1] (0,0) to node[midway]{\small{$\tau_1$}} (3,0)
	  [name path=2]	(3,0) to node[midway]{\small{$\tau_2$}}(3,3)
		(3,3) to node[midway]{\small{$\tau_1$}}(0,3)
		(0,3) to node[midway]{\small{$\tau_2$}} (0,0)
		(3,0) to (1.35,.95);
\draw[name path=5](.65,2.43) to node[midway]{\small{$\tau_5$}}(1.05,1.25)
	(3,0) to (1.4,1.2);
\draw[name path=6](.65,2.43) to node[midway]{\small{$\tau_6$}}(0,0)
	 [name path=3](.65,2.43) to node[midway]{\small{$\tau_3$}}(3,3);
\node at (1.75, 1.05){\small{$\tau_{10}$}}; 
\node at (1.75,.62){\small{$\tau_{12}$}};
%%% Arcos curvos
%[name path=8](0,0) to node[midway]{$\tau_{8}$}(.85,.75)
\draw(0,0) .. controls +(0:.4cm) and +(30:.3cm).. node[midway]{\small{$\tau_8$}}(1.15,.9);
\draw(3,0) .. controls +(130:1cm) and +(60:1cm)..node[midway]{\small{$\tau_{11}$}}(1.05,1.25);
\draw(3,0) .. controls +(160:.8cm) and +(280:1cm)..node[midway]{\small{$\tau_9$}}(1.15,.9);
\draw(3,0).. controls +(110:1cm) and +(20:1cm).. node[midway]{\small{$\tau_4$}}(.6,2.4);
\draw(0,0)..controls +(70:.3cm) and +(210:.3cm)..node[midway]{\small{$\tau_7$}}(1.05,1.25);
%%%% Puntos marcados
\filldraw [black] (0,3) circle(1.5pt)
				(.65,2.43) circle(1.3pt)
				(1.4,1.2) circle(1.3pt)
				(1.35,.95) circle(1.3pt)
				(1.05,1.25) circle(1.3pt)
				(1.15,.9) circle(1.3pt)
				(3,0) circle(1.3pt)
				(0,0) circle(1.3pt)
				(3,3) circle(1.3pt);
%%% triangulos interiores
%\fillbetween[of=3 and 4 and 2];
\end{tikzpicture}
}
\caption{Two triangulation of a torus with two boundaries}
\label{conterexample}
\end{figure}

According our main result, the computation of the Hochschild cohomology dimensions of $A_{\TT_1}$ or $A_{\TT_2}$ depend only on \emph{the number of internal triangles and the existence of boundaries of type $0$ or $1$ in $\TT_1$ (or $\TT_2$ respectively) and the number of arrows and vertices of the quiver $Q_{\TT_1}$}(or $Q_{\TT_2}$ respectively). We show that for both triangulations $\TT_1$ and $\TT_2$ those numbers coincide.

First of all, observe that both triangulations have four internal triangles: $\triangle_1(\tau_2,\tau_3,\tau_5)$, $\triangle_2(\tau_4,\tau_5, \tau_{11})$, $\triangle_3(\tau_5,\tau_6,\tau_7)$ and $\triangle_4(\tau_8, \tau_1,\tau_9)$ and neither of them have boundaries of type $0$ or $1$.

Recall that the number of vertices of a Jacobian algebra $A_{\TT}$ from a triangulated surface $(S,M,\TT)$ is equal to $6g+3b+c-6$, where $g$ is the genus of $S$, $b$ is the number of boundaries components and $c=\abs{M}$. Since, $\TT_1$ and $\TT_2$ are triangulations of a torus with two boundaries components and six marked points, we have that
$$\abs{(Q_{\TT_1})_0}=\abs{(Q_{\TT_2})_0}=12.$$

As mentioned before in Remark \ref{HH}, the number of arrows of $(Q_{\TT_i})_1$ for $i=1,2$ can be computed considering the number of internal triangles and the number of triangles with exactly one arc being part of a boundary component. In both cases, the triangulations have four internal triangles and eight triangles of the second type, then $$\abs{(Q_{\TT_1})_1}=\abs{(Q_{\TT_2})_1}=20.$$

%Moreover, the number of arcs is $12=\abs{Q_0(A_{\TT_1})}=\abs{Q_0(A_{\TT_2})}$. Since both triangulation have the same number of internal triangles and the same number of triangles with one arc as part of a boundary component, namely eight of them, we have that $\abs{Q_1(A_{\TT_1})}=\abs{Q_1(A_{\TT_2})}=20$.

Then it is clear that $\operatorname{HH}^n(A_{\TT_1})=\operatorname{HH}^n(A_{\TT_2})$ for every $n\in \mathbb N$ and we already showed that $\abs{(Q_{\TT_1})_0}=\abs{(Q_{\TT_2})_0}$ as we claim.

It was proven by Avella-Alaminos and Geiss that any two derived equivalent algebras $A$ and $A'$ have the same AG-invariant. Then in order to prove that $A_{\TT_1}$ and $A_{\TT_2}$ are not derived equivalent we show that $\phi_{A_{\TT_1}}\neq \phi_{A_{\TT_2}}$.

We compute the AG-invariant for $A_{\TT_1}$ and $A_{\TT_2}$ using Theorem \ref{TeoDRS}. Since the boundary components $B_1$ and $B_2$ of $(S,M,\TT_1)$ have three marked points being incident to at least one arc and there are three boundary segments between them, we have $\phi_{A_{\TT_1}}(3,3)=2$. However, there are no boundary components in $(S,M,\TT_2)$ with exactly three marked being incident to at least one arc, then $\phi_{A_{\TT_2}}(3,3)=0$. Therefore, the algebras $A_{\TT_1}$ and $A_{\TT_2}$ are not derived equivalent.
\end{exam}

\begin{corollary}
The derived class of Jacobian algebras from a unpunctured surface $(S,M)$ is not always completely determined by the Hochschild cohomology
%The Hochschild cohomology is not a complete derived invariant for Jacobian algebras from triangulated surfaces.
\end{corollary}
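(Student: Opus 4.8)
The plan is to read the statement directly off the Example constructed just above, so the proof is short. First I would recall that in that Example we exhibited two triangulations $\TT_1$ and $\TT_2$ of a torus with two boundary components and six marked points, and computed, via Theorem~\ref{principal}, that the graded $k$-vector spaces $\operatorname{HH}^*(A_{\TT_1})$ and $\operatorname{HH}^*(A_{\TT_2})$ agree in every degree: both triangulations have four internal triangles, neither has a boundary of type $0$ or of type $1$, and the associated quivers have the same number of vertices and of arrows ($12$ and $20$ respectively). By Theorem~\ref{principal} the dimension of each Hochschild cohomology group depends only on this data, so $\operatorname{HH}^*(A_{\TT_1})\cong\operatorname{HH}^*(A_{\TT_2})$ as graded $k$-vector spaces, and moreover $\abs{(Q_{\TT_1})_0}=\abs{(Q_{\TT_2})_0}$.

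Next I would invoke the theorem of Avella-Alaminos and Geiss \cite{AG08} stating that the AG-invariant is a derived invariant: were $A_{\TT_1}$ and $A_{\TT_2}$ derived equivalent, we would have $\phi_{A_{\TT_1}}=\phi_{A_{\TT_2}}$. Using the David-Roesler--Schiffler computation (Theorem~\ref{TeoDRS}) I would then compare the two invariants. The boundary components $B_1,B_2$ of $(S,M,\TT_1)$ each carry three marked points incident to an arc, with three boundary segments joining them, so they contribute the ordered pair $(3,3)$ twice, whereas no boundary component of $(S,M,\TT_2)$ has exactly three marked points incident to an arc; hence $\phi_{A_{\TT_1}}(3,3)=2\neq 0=\phi_{A_{\TT_2}}(3,3)$. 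Therefore $A_{\TT_1}$ and $A_{\TT_2}$ are not derived equivalent, while their Hochschild cohomology (and number of vertices) coincide, which is precisely the assertion of the Corollary.

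The only genuine work, already carried out in the Example, is producing an honest pair of non-derived-equivalent triangulations on a fixed surface whose Hochschild cohomology nonetheless agrees; this is the delicate point, since by Theorem~\ref{principal} the cohomology sees only the number of internal triangles and the boundaries of type $0$ and $1$, so one must match these while arranging the AG-invariant to differ. Once such an example is in hand, no further obstacle remains and the Corollary is immediate.
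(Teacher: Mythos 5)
Your proposal is correct and follows essentially the same route as the paper: the Corollary is read off the preceding Example, using Theorem \ref{principal} to match the Hochschild cohomology dimensions (same number of internal triangles, no boundaries of type $0$ or $1$, equal vertex and arrow counts) and the AG-invariant computation of Theorem \ref{TeoDRS} together with its derived invariance to distinguish $A_{\TT_1}$ and $A_{\TT_2}$ via $\phi_{A_{\TT_1}}(3,3)=2\neq 0=\phi_{A_{\TT_2}}(3,3)$. No gap; this matches the paper's argument.
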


\begin{acknowledgements}
The author was partially supported by Post-doctoral scholarship of CONICET and by NSERC of Canada. She is grateful to Ibrahim Assem and Sonia Trepode for several, interesting and helpful discussions. The author is deeply grateful to D\'epartement de Math\'ematiques of the Universit\'e de Sherbrooke and Ibrahim Assem for providing warming and ideal working conditions during her stay at Sherbrooke.
\end{acknowledgements}
\section*{References}

\bibliographystyle{elsarticle-num}
\bibliography{refe}

\end{document}